\documentclass[11pt]{article}

\usepackage{graphicx}
\usepackage{here}
\usepackage{float}
\usepackage{geometry}
\usepackage{bm}
\usepackage[hang,small,bf]{caption}
\usepackage[subrefformat=parens]{subcaption}
\captionsetup{compatibility=false}
\usepackage{ascmac}
\usepackage{bbm}
\usepackage{mathrsfs}
\usepackage{amsmath}
\usepackage{amssymb}
\usepackage{amsthm}
\usepackage{booktabs}
\usepackage{pgfplots}
\pgfplotsset{compat=1.18}
\usepackage{listings}
\usepackage{tabularx}

\usepackage{tikz}

\usepackage{siunitx}
\usepackage{pgfplotstable}
\pgfplotsset{compat=1.18}
\sisetup{
  detect-all,
  table-number-alignment = center,
  table-format = 1.3,  
  round-mode = places, round-precision = 3
}

\usepackage[round]{natbib}
\setlength{\textfloatsep}{6pt} 


\newcommand{\argmin}{\mathop{\rm argmin}\limits}
\newcommand{\mm}{\mathcal M}

\theoremstyle{plain}
\newtheorem{theorem}{Theorem}
\newtheorem*{theorem*}{Theorem}

\newtheorem{prop}{Proposition}
\newtheorem{assumption}{Assumption}
\newtheorem{corollary}{Corollary}
\theoremstyle{remark}
\newtheorem{remark}{Remark}

\usepackage{tikz}
\usetikzlibrary{shapes,fit,positioning,calc,arrows.meta,decorations.pathreplacing}

\numberwithin{equation}{section}

\begin{document}

\title{Spacing Test for Fused Lasso}

\author{
Rieko Tasaka\thanks{\texttt{tasaka@sigmath.es.osaka-u.ac.jp}} \and
Tatsuya Kimura\thanks{\texttt{kimura@sigmath.es.osaka-u.ac.jp}} \and
Joe Suzuki\thanks{\texttt{prof.joe.suzuki@gmail.com}}
}

\date{
Department of Mathematical Sciences, Osaka University\\
Toyonaka, Osaka 560-8531, Japan
}
\maketitle

\begin{abstract}
Detecting changepoints in a one-dimensional signal is a classical yet fundamental problem.
The fused lasso provides an elegant convex formulation that produces a stepwise estimate of the mean, 
but quantifying the uncertainty of the detected changepoints remains difficult.
Post-selection inference (PSI) offers a principled way to compute valid $p$-values 
after a data-driven selection, but its application to the fused lasso has been considered
computationally cumbersome, requiring the tracking of many ``hit'' and ``leave'' events
along the regularization path.
In this paper, we show that the one-dimensional fused lasso has a surprisingly simple geometry:
each changepoint enters in a strictly one-sided fashion, and there are no leave events.
This structure implies that the so-called \emph{conservative spacing test}
of Tibshirani et al.\ (2016), previously regarded as an approximation, is in fact \emph{exact}.
The truncation region in the selective law reduces to a single lower bound given by 
the next knot on the LARS path.
As a result, the exact selective $p$-value takes a closed form identical to the simple
spacing statistic used in the LARS/lasso setting, with no additional computation.
This finding establishes one of the rare cases in which an exact PSI procedure 
for the generalized lasso admits a closed-form pivot.
We further validate the result by simulations and real data,
confirming both exact calibration and high power.

Keywords: fused lasso; changepoint detection; post-selection inference; spacing test; monotone LASSO
\end{abstract}


\section{Introduction}\label{sec:intro}

Detecting changepoints in sequential data is a long-standing problem 
in statistics, with applications in genomics, econometrics, and neuroscience.
Among many approaches, the \emph{fused lasso} \citep{Fused} is especially appealing:
it estimates a piecewise-constant mean by penalizing differences between
neighboring coefficients, thereby automatically detecting abrupt changes in level.
Figure~\ref{fig:fused} illustrates how consecutive data points merge into
flat segments as the regularization parameter $\lambda$ increases.

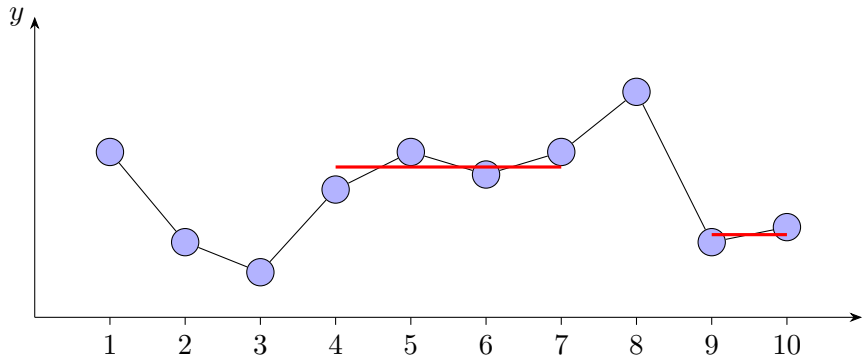
\begin{figure}
\centering
\begin{tikzpicture}[
vertex/.style={circle,draw,fill=blue!30,minimum size=0.0mm}
]
\node[vertex] (a1) at (1,2.2) {};
\node[vertex] (a2) at (2,1) {};
\node[vertex] (a3) at (3,.6) {};
\node[vertex] (a4) at (4,1.7) {};
\node[vertex] (a5) at (5,2.2) {};
\node[vertex] (a6) at (6,1.9) {};
\node[vertex] (a7) at (7,2.2) {};
\node[vertex] (a8) at (8,3) {};
\node[vertex] (a9) at (9,1) {};
\node[vertex] (a10) at (10,1.2) {};
\foreach \i in {1,...,9}{
  \pgfmathtruncatemacro{\j}{\i+1}
  \draw (a\i) -- (a\j);
}
\draw[red, very thick] (4,2) -- (7,2);
\draw[red, very thick] (9,1.1) -- (10,1.1);
\draw[-Stealth] (0,0) -- (0,4) node[left] {$y$};
\draw[-Stealth] (0,0) -- (11,0);
\foreach \i in {1,...,10}{
  \draw (\i,0) -- (\i,-0.1); 
  \node[below] at (\i,-0.1) {\i}; 
}
\end{tikzpicture}
\caption{
The fused lasso promotes identical values across neighboring coefficients, merging adjacent segments when supported by the data. In the illustration, the merged segments are highlighted in bold red.}
\label{fig:fused} 
\end{figure}

While estimation by the fused lasso is now routine, statistical inference remains challenging.
If we use the same data both to select changepoints and to test them,
naive $p$-values become invalid because the selection process depends on the data itself.
Post-selection inference (PSI) \citep{Leeetal,TibshiraniPSI} 
addresses this issue by conditioning on the selection event, 
typically represented as a polyhedron $\{Ay\le b\}$ in the data space.
Under this framework, the target statistic follows a truncated normal distribution (Figure~\ref{fig:truncnorm}), 
yielding exact conditional $p$-values.

For the ordinary lasso or LARS path, \citet{Tibshirani2016} derived the 
\emph{spacing test}, an elegant closed-form PSI statistic that depends only
on the neighboring knots $(\lambda_{k-1},\lambda_k,\lambda_{k+1})$ along the path.
However, extending this simplicity to the fused lasso has been considered infeasible:
the generalized-lasso formulation requires tracking both ``hit'' and ``leave'' events
\citep{TibshiraniTaylor2011,Hyun2018}, leading to a high-dimensional and opaque
polyhedral system.

\subsection{Related work}\label{subsec:related}
Having introduced the motivation above, we briefly summarize related work.

Data splitting as a safeguard against selection was discussed by \citet{Cox1975}.
A general framework for valid PSI was developed by \citet{Berketal} and refined by \citet{Fithian}.
For the lasso and sequential regression paths, \citet{Leeetal,TibshiraniPSI} established the polyhedral lemma,
and \citet{Tibshirani2016} derived the spacing test for LARS.
For generalized-lasso paths (including fused lasso), \citet{TibshiraniTaylor2011,Hyun2018}
provided finite-sample exact PSI by conditioning on both hitting and leaving events.
In changepoint problems, \citet{Jewell2022} and \citet{Chen2023} proposed post-hoc inference methods
after or beyond detection.
Despite this progress, a \emph{simple and exact spacing geometry} tailored to the one-dimensional fused lasso—with a minimal, transparent polyhedral description—has not been made explicit.
We close this gap.

\begin{figure}
\centering
\begin{tikzpicture}[scale=1.1, y=50mm]
  \draw[->] (-3.5,0) -- (3.5,0) node[right] {$\hat a$};
  \draw[->] (0,0) -- (0,0.45);
  \draw[thick,gray!60,domain=-3:3,samples=200]
    plot(\x,{0.4*exp(-0.5*\x*\x)});
  \begin{scope}
    \clip (1,0) rectangle (3.5,0.45);
    \fill[red!30,opacity=0.7]
      plot[domain=-3:3,samples=200]
      (\x,{0.4*exp(-0.5*\x*\x)});
  \end{scope}
  \draw[dashed,thick] (1,0) -- (1,0.25) node[above] {$r_0$};
  \node[gray!60] at (-1.5,0.3) {Full normal};
  \node[red!70!black] at (2.5,0.15) {Truncated region};
  \node at (0,-0.12) {Model selected: $r\ge r_0$};
\end{tikzpicture}
\caption{PSI in a nutshell: once a data-driven rule selects a model,
the reference distribution becomes a truncated normal, not a full one.}
\label{fig:truncnorm}
\end{figure}
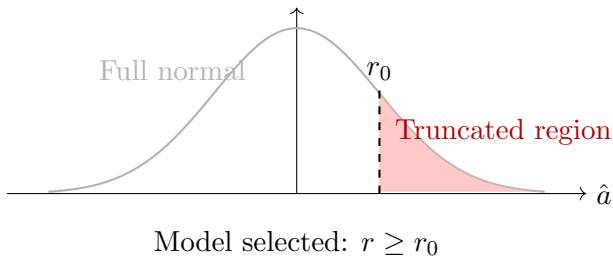

\medskip
\noindent\textbf{Our contribution.}
We show that the geometry of the \emph{one-dimensional fused lasso} is far simpler than previously thought.
Because changepoints enter sequentially without leaving (under general position),
the selection event is \emph{one-sided}.
Consequently, the conservative spacing statistic in \citet{Tibshirani2016}
---which replaces the random lower truncation point by the next knot $\lambda_{k+1}$---
turns out to be \emph{exact} in this setting.
In other words, the exact selective law is determined entirely by the two adjacent knots
$(\lambda_{k-1},\lambda_{k+1})$, with no additional inequalities.

This result provides:
\begin{itemize}
  \item a transparent geometric explanation of PSI for the fused lasso;
  \item a closed-form expression for the exact selective $p$-value identical to the simple spacing test; and
  \item empirical evidence showing exact calibration and high power with negligible computational cost.
\end{itemize}

\noindent
Section~\ref{sec:prelim} reviews the fused lasso and PSI with intuitive illustrations.
Section~\ref{sec:psi-1dfl} presents the one-sided characterization and the exact spacing statistic.
Section~\ref{sec:experiments} validates the theory numerically and on real data.


\section{Preliminaries}\label{sec:prelim}
This section provides the background for our study. 
Throughout the paper, we denote the sample size and the number of variables by $n\ge 1$ and $p\ge 1$, respectively. We observe $y\in\mathbb{R}^n$ generated from an unknown mean vector $\mu\in\mathbb{R}^n$ corrupted by Gaussian noise $\epsilon\in\mathbb{R}^n$ with zero mean and known variance $\sigma^2$:
\begin{align}
y=\mu+\epsilon,\qquad \epsilon\sim N(0,\sigma^2 I). \label{seikikatei}
\end{align}

\subsection{Lasso, LARS, and Fused Lasso}
We first introduce Lasso, LARS, and Fused Lasso.

For linear regression with a design matrix $X\in\mathbb{R}^{n\times p}$, a response $y\in\mathbb{R}^n$, and a tuning parameter $\lambda\ge 0$, the Lasso (Least Absolute Shrinkage and Selection Operator; \citealp{Lasso}) estimates coefficients $\beta\in\mathbb{R}^p$ by
\begin{align}
  \hat\beta(\lambda)\in\argmin_{\beta\in\mathbb{R}^p}\; \frac12\|y-X\beta\|_2^2+\lambda\|\beta\|_1. \label{linearLasso}
\end{align}
By the $\ell_1$ penalty, larger $\lambda$ encourages exact zeros in the estimate ($\hat\beta_j=0$), thus performing variable selection. We then define the selected model
\[
\hat{\mm}(y):=\{\,j\in\{1,\ldots,p\}\mid \hat\beta_j(\lambda)\neq 0\,\}.
\]

On the other hand, Least Angle Regression (LARS; \citealp{LARS}) is a pathwise algorithm closely related to the Lasso. 
Its computational cost is comparable to a single least-squares fit (e.g., \(O(np^2)\) when \(n\ge p\)), 
and it produces a piecewise-linear coefficient path \(\beta:[0,\infty)\to\mathbb{R}^p\) that is analytically tractable.

\begin{figure}
\centering
\begin{tikzpicture}[x=1.5mm,y=1.2mm,
vertex/.style={circle,draw,fill=red, inner sep=0pt, minimum size=2mm}
]
\coordinate (b0) at (50,0);
\coordinate (b1) at (40,0);
\coordinate (b2) at (30,20);
\coordinate (b3) at (20,32);
\coordinate (b8) at (10,40);
\coordinate (b9) at (0,43);
\coordinate (b5) at (55,0);
\coordinate (b6) at (30,0);
\coordinate (b7) at (20,0);
\coordinate (a1) at (0,0);
\coordinate (a2) at (0,20);
\coordinate (a3) at (0,32);
\coordinate (a4) at (0,55);
\coordinate (a8) at (10,0);
\coordinate (a6) at (0,40);
\coordinate (a9) at (0,50);
\draw[-Stealth,thick] (a1) -- (b5);
\draw[-Stealth,thick] (a1) -- (a4);
\draw[red, very thick] (b0) -- (b1);
\draw[red, very thick] (b1) -- (b2);
\draw[red, very thick] (b2) -- (b3);
\draw[red, dashed, very thick] (b3) -- (b8);
\draw[red, very thick] (b8) -- (b9);
\draw[dashed] (b6) -- (b2);
\draw[dashed] (b7) -- (b3);
\draw[dashed] (a2) -- (b2);
\draw[dashed] (a3) -- (b3);
\draw[dashed] (a8) -- (b8);
\draw[dashed] (a6) -- (b8);
\node[below=2pt] at (b1) {$\lambda_1$};
\node[below=2pt] at (b6) {$\lambda_2$};
\node[below=2pt] at (b7) {$\lambda_3$};
\node[below=2pt] at (a8) {$\lambda_{p-1}$};
\node at (53,-3) {$\lambda$};
\node[below=2pt] at (a1) {$\lambda_p=0$};
\node[left=4pt] at (a1) {$\beta_1=0$};
\node[left=2pt] at (a2) {$\beta_2$};
\node[left=2pt] at (a3) {$\beta_3$};
\node[left=2pt] at (a6) {$\beta_{p-1}$};
\node[left=2pt] at (b9) {$\beta_{p}$};
\node at (-4,52) {$\beta(\lambda)$};
\node at (47,11) {$\displaystyle \beta_1+(1-\frac{\lambda}{\lambda_1})
\left[\begin{array}{c}
*\\
0\\
\vdots\\
0
\end{array}\right]
$};
\node at (35,29) {$\displaystyle \beta_2+(1-\frac{\lambda}{\lambda_2})
\left[\begin{array}{c}
*\\
*\\
0\\
\vdots\\
0
\end{array}\right]
$};
\node at (17,44) {$\displaystyle \beta_{p-1}+(1-\frac{\lambda}{\lambda_{p-1}})
\left[\begin{array}{c}
*\\
\vdots\\
*\\
0
\end{array}\right]
$};
\node[vertex] at (b1) {};
\node[vertex] at (b2) {};
\node[vertex] at (b3) {};
\node[vertex] at (b8) {};
\node[vertex] at (b9) {};
\end{tikzpicture}
\caption{
LARS solution path in the $(\lambda,\beta)$ plane \emph{(schematic; not to scale)}.
The red polyline shows the piecewise-linear trajectory of $\beta(\lambda)$ as $\lambda$ decreases from $\lambda_1$ to $\lambda_p=0$.
Vertical dashed guides mark the knots $\lambda_1>\lambda_2>\cdots>\lambda_p=0$, and horizontal guides mark the corresponding coordinates in $\beta$.
Note that the vertical axis represents the $p$-dimensional coefficient vector $\beta(\lambda)\in\mathbb{R}^p$; for visualization we depict its coordinates stacked as parallel levels (a schematic representation rather than a literal 2D plot).}
\label{fig:lars} 
\end{figure}
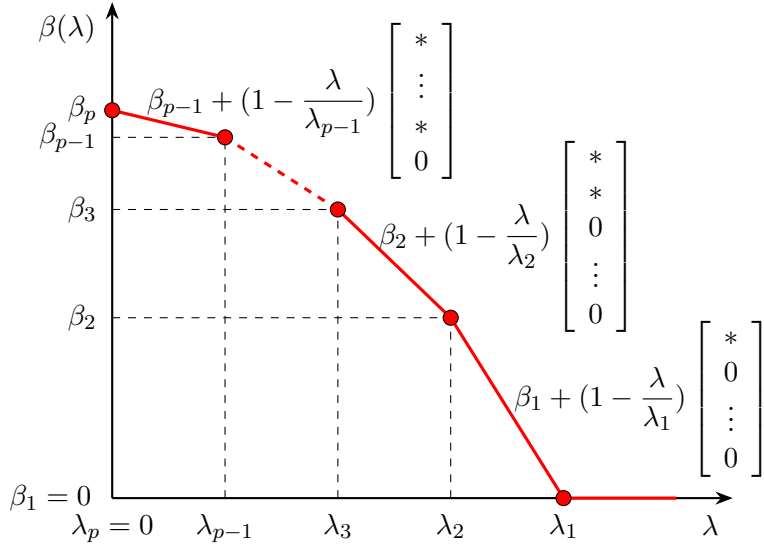

Let $X_{\mathcal{A}}$ denote the submatrix of $X$ with columns indexed by $\mathcal{A}$. Define the sequence of “knots” $\lambda_1\ge\lambda_2\ge\cdots$ and active sets $\mm_1, \mm_2,\ldots$ as follows (Figure \ref{fig:lars}). 
Set \(\beta_1=0\), \(r_1=y\), and define 
\(\lambda_1:=\max_j |X_j^\top y|\) with \(j_1\in\arg\max_j |X_j^\top y|\), so \(\mathcal M_1=\{j_1\}\).
Let \(r(\lambda):=y-X\beta(\lambda)\).
As \(\lambda\) decreases from \(\lambda_1\), define \(\lambda_2<\lambda_1\) as the largest value at which some \(j\notin\mathcal M_1\) first attains 
\(|X_j^\top r(\lambda)|=\lambda\); choose any maximizer as \(j_2\), set \(\beta_2:=\beta(\lambda_2)\), \(r_2:=r(\lambda_2)\), and \(\mathcal M_2=\{j_1,j_2\}\).
(Hereafter, \(r_k:=y-X\beta(\lambda_k)\).)

At each stage $k$, LARS maintains the “equal-angle” condition
\begin{align}\label{eq10}
X_j^\top r(\lambda)=\pm \lambda,\qquad \text{for all } j\in\mm_k,\ \lambda\in[\lambda_{k+1},\,\lambda_k],
\end{align}
i.e., all active variables have equal absolute correlation with the current residual
for $\lambda$ between the successive knots (with the convention $\lambda_{k+1}:=0$ when $k$ is the last step). Once a variable becomes active ($\beta_j\neq 0$), it remains active as $\lambda$ decreases along the LARS path, and \eqref{eq10} holds on each segment $[\lambda_{k+1},\lambda_k]$.

The Lasso solution satisfies a slightly different condition:
\begin{align}
X_j^\top r(\lambda)=\mathrm{sign}(\beta_j)\,\lambda.
\end{align}
Hence, unlike LARS, a Lasso-active variable can drop out later if its sign constraint conflicts with the equal-angle direction; this leads to path differences between LARS and Lasso \citep{efron2004least}.

Finally,
(1D) Fused Lasso \citep{Fused} estimates a piecewise-constant mean $\mu\in\mathbb{R}^n$ via
\begin{align}
\hat\mu(\lambda)\in\argmin_{\mu\in\mathbb{R}^n}\; \frac12\|y-\mu\|_2^2+\lambda\|D\mu\|_1,\label{1dfl}
\end{align}
given $y\in\mathbb{R}^n$ and $\lambda>0$, 
where $D\in\mathbb{R}^{(n-1)\times n}$ is the first-difference matrix
\begin{align}\label{eq2-121}
  D=\begin{pmatrix}
    -1 & 1 & 0 & \cdots & 0 & 0 \\
     0 &-1 & 1 & \cdots & 0 & 0 \\
    \vdots&\vdots&\vdots&\ddots&\vdots&\vdots\\
     0 & 0 & 0 & \cdots &-1 & 1
  \end{pmatrix}.
\end{align}
As $\lambda$ increases, more consecutive components of $\mu=(\mu_1,\ldots,\mu_n)$ coalesce. For instance, in Figure~\ref{fig:fused}, the block $\{y_4,\ldots,y_7\}$ being close in value tends to share a common level $\mu_i$ under a sufficiently large $\lambda$.

\subsection{Post-selection inference}
A conventional workflow selects a model with the data and then reuses the same data to perform inference, which ignores the effect of selection on the sampling distribution of estimators and thus yields anti-conservative inference. Post-selection inference (PSI) conditions on the selection event so that coverage statements remain valid.

For a fixed $\mm\subseteq\{1,\ldots,p\}$, let
\[
\beta^\mm:=\argmin_{\beta}\|y-X_\mm\beta\|_2^2=(X_\mm^\top X_\mm)^{-1}X_\mm^\top y,
\]
assuming $X_\mm^\top X_\mm$ is invertible. If a model $\hat\mm$ is selected from the data, a nominal $(1-\alpha)$ interval for $\beta_j^\mm$ should account for the event $\{\hat\mm=\mm\}$:
\begin{align}
\Pr\!\bigl(\beta_j^{\hat\mm}\in C^{\hat\mm}_j\mid \hat\mm=\mm\bigr)\;\ge\;1-\alpha,\label{confPSI}
\end{align}
instead of the unconditional form.

Let $\hat\beta^\mm=(X_\mm^\top X_\mm)^{-1}X_\mm^\top y$ and define
$\eta:=X_\mm(X_\mm^\top X_\mm)^{-1}e_j\in\mathbb{R}^n$, where $e_j$ is the $j$-th canonical vector. Inference on $\beta_j^\mm$ reduces to studying the conditional distribution of the linear statistic
\begin{equation}
  \eta^\top y\ \big|\ \{\hat\mm=\mm\}. \label{joken1}
\end{equation}

For the Lasso in linear regression, the joint event $\{\hat\mm=\mm,\ \hat s_\mm=s_\mm\}$ (with a sign vector $s_\mm$) can be written as a polyhedral constraint $\{Ay\le b\}$ for some matrix $A$ and vector $b$; see \citet{TibshiraniPSI,Leeetal}.

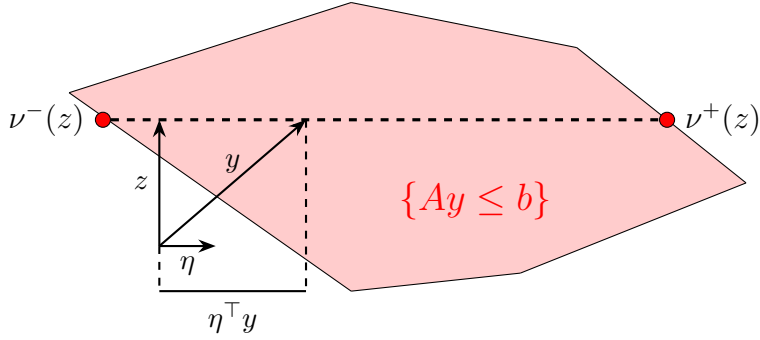
\begin{figure}
\centering
\begin{tikzpicture}[x=1.5mm,y=1.2mm,
vertex/.style={circle,draw,fill=red, inner sep=0pt, minimum size=2mm}
]
\coordinate (a1) at (0,30);
\coordinate (a2) at (25,8);
\coordinate (a3) at (40,10);
\coordinate (a4) at (60,20);
\coordinate (a5) at (45,35);
\coordinate (a6) at (25,40);
\coordinate (b3) at (8,13) {};
\coordinate (b4) at (8,27) {};
\coordinate (b5) at (13,13) {};
\coordinate (b6) at (21,27) {};
\coordinate (b7) at (8,8) {};
\coordinate (b8) at (21,8) {};

\fill[red!20] (a1) -- (a2) -- (a3) -- (a4) -- (a5) -- (a6) -- cycle;
\foreach \i in {1,...,5}{
  \pgfmathtruncatemacro{\j}{\i+1}
  \draw (a\i) -- (a\j);
}
\draw (a6) -- (a1);
\node[red] (c1) at (36,18) {\Large $\{Ay\leq b\}$};
\node[vertex, label=left:{\large $\nu^{-}(z)$}] (b1) at (3,27) {};
\node[vertex, label=right:{\large $\nu^{+}(z)$}] (b2) at (53,27) {};
\draw[dashed, very thick] (b1) -- (b2);
\draw[-Stealth,thick] (b3) -- node[left]{$z$}(b4);
\draw[-Stealth,thick] (b3) -- node[below]{$\eta$}(b5);
\draw[-Stealth,thick] (b3) --node[above]{$y$} (b6);
\draw[thick] (b7) -- node[below]{$\eta^\top y$}(b8);
\draw[dashed,thick] (b3) -- (b7);
\draw[dashed,thick] (b6) -- (b8) ;
\end{tikzpicture}
\caption{Polyhedral lemma (schematic). For fixed 
$z=(I-c\eta^\top)y$, the selection event $\{Ay\le b\}$ restricts the scalar statistic $\eta^\top y$ to the interval $[\nu^{-}(z),\nu^{+}(z)]$; the gray band indicates this feasible interval for a given $z$.}
\label{fig:poly} 
\end{figure}

The polyhedral representation $\{Ay\le b\}$ encodes the event $\{\hat\mm=\mm,\ \hat s_\mm=s_\mm\}$ \citep{Leeetal,TibshiraniPSI}, and marginalizing over $s_\mm$ gives $\{\hat\mm=\mm\}$. Then the conditional target \eqref{joken1} takes the form
\[
\eta^\top y\ \big|\ \{Ay\le b\}.
\]
Define
\begin{align}
z:=(I-c\eta^\top)y,\qquad
c:=\Sigma\eta(\eta^\top\Sigma\eta)^{-1}=\frac{\eta}{\|\eta\|_2^2}\ \ (\text{under }\Sigma=\sigma^2 I),\qquad \Sigma:=\sigma^2 I.\label{zteigi}
\end{align}
Under the Gaussian model \eqref{seikikatei} with $\mu=X\beta$, $\eta^\top y$ and $z$ are uncorrelated and hence independent
(Figure \ref{fig:poly}). 
Conditioning on $\{Ay\le b\}$ then yields a truncated normal law for $\eta^\top y$ with truncation limits that depend only on $z$:
\begin{prop}[\cite{Leeetal}]
With $z$ and $c$ defined in \eqref{zteigi}, the event $\{Ay\le b\}$ is equivalent to
\[
\{\nu^-(z)\le \eta^\top y\le \nu^+(z),\ \nu^0(z)\ge 0\},
\]
where
\begin{align}
\nu^-(z)=\underset{j:(Ac)_j<0}{\mathrm{max}}\frac{b_j-(Az)_j}{(Ac)_j},\qquad
\nu^+(z)=\underset{j:(Ac)_j>0}{\mathrm{min}}\frac{b_j-(Az)_j}{(Ac)_j},\qquad
\nu^0(z)=\underset{j:(Ac)_j=0}{\mathrm{max}}\,b_j-(Az)_j.
\end{align}
\end{prop}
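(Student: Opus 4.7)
The plan is to reduce the multi-row linear inequality $Ay\le b$ to a scalar inequality in $s:=\eta^\top y$ by decomposing $y$ into a component along $\eta$ and an orthogonal (in the $\Sigma$-metric) remainder, then to classify the rows according to the sign of their image under $c$. This is essentially a one-dimensional Fourier--Motzkin elimination.

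\medskip
\noindent\textbf{Step 1: algebraic decomposition.}
First I would verify the identity $y = z + c\,(\eta^\top y)$. Since $\Sigma=\sigma^2I$ gives $c=\eta/\|\eta\|_2^2$, we have $\eta^\top c=1$, and hence
\[
z + c\,(\eta^\top y) \;=\; (I-c\eta^\top)y + c\eta^\top y \;=\; y.
\]
This decomposition is purely algebraic and requires no distributional assumption; the Gaussian/independence structure enters only later, when one wants to argue that the marginal law of $s$ given $z$ is a truncated normal.

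\medskip
\noindent\textbf{Step 2: row-by-row reduction.}
Substituting $y = z+cs$ into $Ay\le b$ and writing the inequality row by row, I obtain for each $j$
\[
(Az)_j + s\,(Ac)_j \;\le\; b_j.
\]
Now I would split the rows into three disjoint classes according to the sign of $(Ac)_j$:
\begin{itemize}
\item If $(Ac)_j>0$, dividing yields $s\le \bigl(b_j-(Az)_j\bigr)/(Ac)_j$; intersecting these constraints over all such $j$ gives $s\le \nu^+(z)$.
\item If $(Ac)_j<0$, dividing flips the inequality to $s\ge \bigl(b_j-(Az)_j\bigr)/(Ac)_j$; intersecting over all such $j$ gives $s\ge \nu^-(z)$.
\item If $(Ac)_j=0$, the row degenerates to a condition that does not involve $s$ at all, namely $(Az)_j\le b_j$, i.e.\ $b_j-(Az)_j\ge 0$; collecting these rows gives the separate feasibility condition $\nu^0(z)\ge 0$ (with $\nu^0$ defined as in the statement).
\end{itemize}
Conjoining the three groups recovers the stated equivalence.

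\medskip
\noindent\textbf{Step 3: sanity checks and subtleties.}
The converse direction is immediate: if $s\in[\nu^-(z),\nu^+(z)]$ and $\nu^0(z)\ge 0$, then every row of $Ay\le b$ is satisfied by construction, so $Ay\le b$. The only mild subtleties to flag are (a) the convention when one of the index sets $\{j:(Ac)_j\lessgtr 0\}$ is empty (then $\nu^-=-\infty$ or $\nu^+=+\infty$, and the corresponding bound is vacuous), and (b) that the value $s=\eta^\top y$ is itself a deterministic function of $y$, so the displayed event is an exact rewriting of $\{Ay\le b\}$, not a conditional statement.

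\medskip
\noindent\textbf{Main obstacle.}
There is no real analytic difficulty; the proof is a direct case analysis. The ``hard'' part is purely bookkeeping: keeping the three classes of rows straight, handling empty index sets cleanly, and ensuring that the degenerate-row condition ($\nu^0\ge 0$) is carried along as a separate clause rather than being absorbed into the interval on $s$. Once this is done, the statistical consequence---that conditionally on $z$ the law of $\eta^\top y$ is a normal truncated to $[\nu^-(z),\nu^+(z)]$---follows by combining the above equivalence with the independence of $s$ and $z$ established from $\mathrm{Cov}(z,s)=(I-c\eta^\top)\Sigma\eta=0$.
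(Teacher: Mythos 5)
The paper itself gives no proof of this proposition --- it is imported verbatim from \citet{Leeetal} --- so the only available comparison is with the standard argument in that reference, and your proposal reproduces it exactly: decompose $y=z+c\,(\eta^\top y)$ using $\eta^\top c=1$, substitute into each row of $Ay\le b$, and split the rows by the sign of $(Ac)_j$. The argument is correct, including the remarks on empty index sets and on the purely algebraic (non-distributional) nature of the equivalence. One point you should not have glossed over: your own reasoning in the degenerate case requires \emph{every} row with $(Ac)_j=0$ to satisfy $b_j-(Az)_j\ge 0$, which is the condition $\min_{j:(Ac)_j=0}\bigl(b_j-(Az)_j\bigr)\ge 0$; the statement (as printed in this paper) defines $\nu^0$ with a $\max$, which would only require \emph{some} such row to hold and is therefore not equivalent. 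This is a typo in the paper (Lee et al.\ use $\min$), but by writing ``with $\nu^0$ defined as in the statement'' you silently endorsed a definition that contradicts the case analysis you just carried out; the fix is simply to replace $\max$ by $\min$ in $\nu^0$.
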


Let $TN(\mu,\sigma^2,a,b)$ denote a normal distribution $N(\mu,\sigma^2)$ truncated to $[a,b]$, and let its CDF be
\begin{align}\label{eq121}
F_{\mu,\sigma^2}^{[a,b]}(x)=\frac{\Phi\bigl((x-\mu)/\sigma\bigr)-\Phi\bigl((a-\mu)/\sigma\bigr)}{\Phi\bigl((b-\mu)/\sigma\bigr)-\Phi\bigl((a-\mu)/\sigma\bigr)},
\end{align}
with $\Phi$ the standard normal CDF.
\begin{prop}[\cite{TibshiraniPSI}]
Under the selection event $\{Ay\le b\}$,
\[
F_{\eta^\top\mu,\ \sigma^2\eta^\top\eta}^{[\nu^-(z),\nu^+(z)]}\bigl(\eta^\top y\bigr)\ \sim\ U[0,1],
\]
and, for fixed $z=z_0$,
\[
\bigl[\eta^\top y\mid Ay\le b,\ z=z_0\bigr]\ \sim\ TN\bigl(\eta^\top\mu,\ \sigma^2\|\eta\|_2^2,\ \nu^-(z_0),\ \nu^+(z_0)\bigr).
\]
\end{prop}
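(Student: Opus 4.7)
The plan is to combine Proposition~1 (the polyhedral reduction) with an independence argument derived from the decomposition $y = c\,\eta^\top y + z$ from \eqref{zteigi}, and then finish via the probability integral transform.

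First, I would establish that $\eta^\top y$ and $z$ are jointly Gaussian and uncorrelated, hence independent, under the model \eqref{seikikatei}. A direct calculation gives
\[
\mathrm{Cov}(\eta^\top y,\,z) \;=\; \eta^\top \Sigma (I - c\eta^\top)^\top \;=\; \sigma^2\eta^\top - \sigma^2\|\eta\|_2^2\, c^\top \;=\; 0,
\]
using $c = \eta/\|\eta\|_2^2$. The marginal law of the statistic is $\eta^\top y \sim N(\eta^\top\mu,\ \sigma^2\|\eta\|_2^2)$.

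Next, I would invoke Proposition~1 to rewrite $\{Ay \le b\}$ as the equivalent event
\[
\{\nu^-(z)\le \eta^\top y\le \nu^+(z)\}\cap\{\nu^0(z)\ge 0\},
\]
where the second factor depends only on $z$ and the first is an interval whose endpoints are $z$-measurable. Conditioning on $\{z = z_0\}$ then freezes those endpoints and reduces the selection event to the single one-dimensional constraint $\nu^-(z_0)\le \eta^\top y\le \nu^+(z_0)$. Combining this reduction with the independence from the previous step, the conditional law of $\eta^\top y$ given $\{Ay \le b,\ z=z_0\}$ is obtained by restricting its unconditional Gaussian law to the fixed interval $[\nu^-(z_0),\nu^+(z_0)]$, which is precisely $TN(\eta^\top\mu,\ \sigma^2\|\eta\|_2^2,\ \nu^-(z_0),\ \nu^+(z_0))$.

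The uniform pivot statement then follows from the probability integral transform: the CDF $F_{\eta^\top\mu,\ \sigma^2\eta^\top\eta}^{[\nu^-(z_0),\nu^+(z_0)]}(\eta^\top y)$ is $U[0,1]$ conditional on each admissible $z_0$, and because the resulting distribution does not depend on $z_0$, the uniformity transfers unconditionally. The main technical subtlety is making the ``freeze $z$'' step rigorous: one must verify that on the rows with $(Ac)_j = 0$ the inequality $b_j - (Az)_j \ge 0$ is genuinely $z$-measurable and does not couple $\eta^\top y$ back into the selection event. This follows from $Ay = (Ac)\,\eta^\top y + Az$, but it is the one place where a careless decomposition would destroy exactness; everything else is routine bookkeeping once independence and the polyhedral reduction are in hand.
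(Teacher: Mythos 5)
Your argument is correct and is exactly the route the paper intends: it relies on the uncorrelatedness (hence independence) of $\eta^\top y$ and $z=(I-c\eta^\top)y$ under the Gaussian model, the polyhedral reduction of Proposition~1 into a $z$-measurable interval $[\nu^-(z),\nu^+(z)]$ plus a purely $z$-dependent constraint $\nu^0(z)\ge 0$, and the probability integral transform with the observation that the conditional $U[0,1]$ law does not depend on $z_0$. The paper itself only cites this result from \citet{TibshiraniPSI} without reproducing the proof, but the sketch it gives in the surrounding text matches your reconstruction step for step, including the point that the $(Ac)_j=0$ rows decouple from $\eta^\top y$.
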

Consequently, a $(1-\alpha)$ selective CI for $\beta_j^\mm$ is
\[
C_j^{\hat\mm}=\Bigl\{\beta_j^\mm:\ {\displaystyle \frac{\alpha}{2}}\le
F_{\beta_j^\mm,\ \sigma^2\eta^\top\eta}^{[\nu^-(z),\nu^+(z)]}(\eta^\top y)\le {\displaystyle 1-\frac{\alpha}{2}}\Bigr\}.
\]

\subsection{Spacing Test}
\label{subsec:spacing}
We now specialize PSI to the LARS path and derive the so-called \emph{spacing test}. 
Along the LARS algorithm, the tuning parameter decreases through \emph{knots} 
$\lambda_1>\lambda_2>\cdots$, and each step $k$ adds a new variable $j_k$ with sign 
$s_k\in\{-1,1\}$ to the active set $\mm_k$. At the instant the $k$-th variable enters, 
the equal-angle condition \eqref{eq10} implies that the correlation of every active 
predictor with the residual has common magnitude $\lambda_k$. Geometrically, the 
statistic driving entry is the projection of $y$ along a certain direction $\eta$ that 
depends on the current active set and the candidate $(j_k,s_k)$. The spacing test 
exploits the \emph{gap} between successive knots (e.g., $\lambda_{k-1}$ vs.\ $\lambda_k$) 
to test whether the new coefficient direction carries genuine signal or is explained 
by noise once we condition on having reached step $k$.

\medskip
\noindent\textbf{From LARS conditions to a polyhedron.}
Let $s_{\mm_{k-1}}:=[s_1,\ldots,s_{k-1}]^\top$. We write the $k$-th step condition
\begin{align}\label{eq122}
X_{j_k}^\top r_k = s_k\,\lambda_k
\quad \Longleftrightarrow \quad
c_k(j_k,s_k)^\top y=\lambda_k,
\end{align}
where
\begin{equation}\label{eq107}
c_k(j,s)
=
\frac{P_{\mm_{k-1}}^\perp X_j}
{s \;-\; X_j^\top X_{\mm_{k-1}}(X_{\mm_{k-1}}^\top X_{\mm_{k-1}})^{-1} s_{\mm_{k-1}}}\!,
\end{equation}
and
\[
P_{\mm_{k-1}}:=X_{\mm_{k-1}}(X_{\mm_{k-1}}^\top X_{\mm_{k-1}})^{-1}X_{\mm_{k-1}}^\top,\quad
P_{\mm_{k-1}}^\perp:=I-P_{\mm_{k-1}}.
\]
(Here the denominator in \eqref{eq107} is a scalar.)
See Appendix~A for the proof.

The vector $c_k(j,s)$ represents the component of $X_j$ orthogonal to the span of 
$X_{\mm_{k-1}}$, normalized and adjusted by the sign constraint. Consider the set of 
\emph{competitors}
\begin{equation}\label{eq41}
S_k:=\bigl\{(j,s):\ j\notin\mm_{k-1},\ s\in\{-1,1\},\ c_k(j,s)^\top y\le \lambda_{k-1}\bigr\}.
\end{equation}
Intuitively, $S_k$ collects all candidates not yet active whose signed correlation has 
not exceeded the previous knot $\lambda_{k-1}$. The LARS update then enforces the 
ordering and nonnegativity constraints
\begin{align}
c_k(j_k,s_k)^\top y &\ge c_k(j,s)^\top y,\quad 
\forall (j,s)\in S_k\setminus\{(j_k,s_k)\}, \label{eq:order}\\
c_k(j_k,s_k)^\top y &\ge 0. \label{eq:nonneg}
\end{align}
These linear inequalities, one per competitor plus one for \eqref{eq:nonneg}, define a 
\emph{polyhedron} in $y$-space that encodes “we reached step $k$ with entrant 
$(j_k,s_k)$.” Collecting the constraints for steps $1,\ldots,k$ yields $k{+}1$ rows in 
$(A,b)$.

Extending this polyhedral construction to the general lasso requires accounting for 
\emph{leave events}, in which a variable that once became active later becomes inactive. 
\citet{Hyun2018} constructed such a general polyhedron that incorporates both 
\emph{hit} and \emph{leave} events.

The constraints in \eqref{eq:order}-\eqref{eq:nonneg} across steps $1,\ldots,k$
can be summarized by the monotone chain
\begin{align}
c_1(j_1,s_1)^\top y \ \ge\ c_2(j_2,s_2)^\top y \ \ge\ \cdots \ \ge\ 
c_k(j_k,s_k)^\top y \ \ge\ 0,
\label{eqjoken1}
\end{align}
together with the \emph{lower-envelope} inequality (hitting side)
\begin{align}
c_k(j_k,s_k)^\top y \ \ge\ 
\lambda_{k+1}^+:=\max_{(j,s)\in S_k^+} \, c_{k+1}(j,s)^\top y,
\label{eqjoken2}
\end{align}
and the \emph{upper-envelope} inequality (leaving side)
\begin{align}
c_l(j_l,s_l)^\top y \ \le\ 
\min_{(j,s)\in S_l^-} \, c_{l+1}(j,s)^\top y,
\qquad l=1,\ldots,k,
\label{eqjoken3}
\end{align}
where $S_l^+, S_l^-\subseteq S_l$ ($1\leq l\leq k$) are defined by
\[
S_l^+ := 
\Bigl\{(j,s):\ j\notin\mm_l,\ s\in\{-1,1\},\
c_l(j,s)^\top c_l(j_l,s_l)< \|c_l(j_l,s_l)\|_2^2,\
c_l(j,s)^\top y\le  c_l(j_l,s_l)^\top y
\Bigr\},
\]
\[
S_l^- := 
\Bigl\{(j,s):\ j\notin\mm_l,\ s\in\{-1,1\},\
c_l(j,s)^\top c_l(j_l,s_l)> \|c_l(j_l,s_l)\|_2^2,\
c_l(j,s)^\top y\le c_l(j_l,s_l)^\top y
\Bigr\}.
\]
(\citealp{Sigtest,tibshirani2016exact,Suzuki2023JSSJ}).  
These forms show that the selection event depends on $y$ only through linear 
functionals $c_\ell(\cdot)^\top y$, and hence is polyhedral.

\medskip
\noindent\textbf{Hypotheses and test statistic.}
At step $k$, let $\eta:=c_k(j_k,s_k)$ and consider the mean contrast $\eta^\top\mu$.
From block algebra,
\[
\eta^\top y \;=\; e_k^\top (X_{\mm_k}^\top X_{\mm_k})^{-1} X_{\mm_k}^\top y,
\]
so that
\[
H_0:\ \eta^\top\mu=0 \ \Longleftrightarrow\ 
H_0:\ e_k^\top (X_{\mm_k}^\top X_{\mm_k})^{-1} X_{\mm_k}^\top \mu = 0,
\qquad
H_1:\ \mathrm{sign}\!\bigl(\eta^\top y\bigr)\cdot \eta^\top\mu > 0.
\]
Conditioning on the selection event \eqref{eqjoken1}-\eqref{eqjoken3}, the polyhedral 
lemma yields a truncated normal law for $\eta^\top y$ with data-dependent truncation 
limits $a=\lambda_{k+1}^+$ and $b=\lambda_{k-1}$ in (\ref{eq121}).

The relation
\[
\eta^\top y 
\sim N(\eta^\top \mu,\, \sigma^2\|\eta\|_2^2)
\Longleftrightarrow 
\frac{\eta^\top(y-\mu)}{\sigma\|\eta\|_2}
\sim N(0,1)
\]
holds, where $\mu=0$ under $H_0$,
and $\omega_k=\|\eta\|_2^{-1}$ can be computed by
\begin{equation}\label{eq21}
\omega_k \ :=\ \bigl\|
X_{\mm_k}(X_{\mm_k}^\top X_{\mm_k})^{-1} s_{\mm_k} -
X_{\mm_{k-1}}(X_{\mm_{k-1}}^\top X_{\mm_{k-1}})^{-1} s_{\mm_{k-1}}
\bigr\|_2 
\end{equation}
(see Appendix~B for the proof). Thus, we obtain the pivotal
\begin{align}
T_k \;:=\; 
\frac{\Phi\!\bigl(\lambda_{k-1}\,\omega_k/\sigma\bigr)
-\Phi\!\bigl(\lambda_{k}\,\omega_k/\sigma\bigr)}
{\Phi\!\bigl(\lambda_{k-1}\,\omega_k/\sigma\bigr)
-\Phi\!\bigl(\lambda_{k+1}^+\,\omega_k/\sigma\bigr)}. 
\label{eqtklinear}
\end{align}
Heuristically, $T_k$ is the upper-tail probability of the truncated normal at 
$x=\lambda_k$. It becomes small when $a=\lambda_k$ is close to the ceiling 
$b=\lambda_{k-1}$, i.e., when the observed spacing 
$\lambda_{k-1}-\lambda_k$ is small relative to the available headroom 
$\lambda_{k-1}-\lambda_{k+1}^+$; this indicates stronger evidence against $H_0$.
\citet{TibshiraniPSI} suggested that, when the exact computation of 
$\lambda_{k+1}^+$ in the statistic $T_k$ is computationally expensive or the 
polyhedral constraints are complex, one may approximate it by $\lambda_{k+1}$, 
leading to a \emph{conservative spacing test}.

With the selection written as $\{Ay\le b\}$, Proposition~1 implies
\begin{prop}[\cite{TibshiraniPSI}]
Under $H_0:\ \eta^\top\mu=0$, the conditional distribution satisfies
\begin{align}
\Pr_{H_0}\bigl(T_k\le \alpha\ \big|\ Ay\le b\bigr)=\alpha. 
\label{eq123}
\end{align}
\end{prop}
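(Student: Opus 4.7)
The proposition is the statement that $T_k$ is a valid (uniformly distributed) selective $p$-value under $H_0$. My plan is to recognize $T_k$ as the complement of a probability-integral transform of $\eta^\top y$ under the truncated normal law delivered by the polyhedral lemma, and then cite Proposition 2 directly. The conditioning argument is standard; the work is in making the identification of truncation limits precise.

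\textbf{Step 1: Set up the linear pivot.} Take $\eta:=c_k(j_k,s_k)$, so that $\eta^\top y$ is Gaussian with mean $\eta^\top\mu$ and variance $\sigma^2\|\eta\|_2^2$, and under $H_0$ the mean is $0$. At the step-$k$ entry instant one has $\eta^\top y=\lambda_k$ by the defining identity \eqref{eq122}. Write $z:=(I-c\eta^\top)y$ with $c=\eta/\|\eta\|_2^2$ as in \eqref{zteigi}; then $\eta^\top y\indep z$ by the usual Gaussian orthogonality.

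\textbf{Step 2: Reduce the polyhedron to a truncation of $\eta^\top y$.} Apply Proposition~1 to the polyhedron $\{Ay\le b\}$ encoding the selection event \eqref{eqjoken1}--\eqref{eqjoken3}. Among those inequalities, isolate the two that involve $\eta^\top y = c_k(j_k,s_k)^\top y$ directly: the monotone-chain link $c_{k-1}(j_{k-1},s_{k-1})^\top y\ge\eta^\top y$ (which on the event is exactly $\lambda_{k-1}\ge\eta^\top y$) and the hitting lower envelope $\eta^\top y\ge\lambda_{k+1}^+$. Every other inequality in \eqref{eqjoken1}--\eqref{eqjoken3} is a constraint either among $c_\ell(\cdot)^\top y$ for $\ell\ne k$ (hence a function of $z$ alone once $\eta^\top y$ is fixed through $y=z+c\,\eta^\top y$ with the coefficient in the $\eta$-direction cancelling) or depends on $\eta^\top y$ only trivially; in the Lee--Tibshirani partition these contribute to $\nu^0(z)$ and to additional bounds that are dominated by the two extremal ones above. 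Thus $\nu^-(z)=\lambda_{k+1}^+$ and $\nu^+(z)=\lambda_{k-1}$.

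\textbf{Step 3: Probability integral transform.} By Proposition~2, conditionally on $\{Ay\le b\}$ and on $z$,
\begin{equation*}
F^{[\lambda_{k+1}^+,\ \lambda_{k-1}]}_{0,\ \sigma^2\|\eta\|_2^2}\bigl(\eta^\top y\bigr)\ \sim\ U[0,1].
\end{equation*}
Substituting $\omega_k=\|\eta\|_2^{-1}$ from \eqref{eq21} and $\eta^\top y=\lambda_k$ gives
\begin{equation*}
F^{[\lambda_{k+1}^+,\ \lambda_{k-1}]}_{0,\ \sigma^2\|\eta\|_2^2}(\lambda_k)=\frac{\Phi(\lambda_k\omega_k/\sigma)-\Phi(\lambda_{k+1}^+\omega_k/\sigma)}{\Phi(\lambda_{k-1}\omega_k/\sigma)-\Phi(\lambda_{k+1}^+\omega_k/\sigma)},
\end{equation*}
and a direct algebraic check shows $T_k=1-F^{[\lambda_{k+1}^+,\lambda_{k-1}]}_{0,\sigma^2\|\eta\|_2^2}(\lambda_k)$. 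Since the transform of a uniform is uniform, $T_k\mid\{Ay\le b\},z\sim U[0,1]$. Integrating over $z$ (the conditional law does not depend on $z$) yields $\Pr_{H_0}(T_k\le\alpha\mid Ay\le b)=\alpha$, as claimed.

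\textbf{Main obstacle.} The only nontrivial step is Step~2: one must verify that the many linear inequalities in \eqref{eqjoken1}--\eqref{eqjoken3}, once decomposed along $(\eta,z)$ via $y=z+c\,\eta^\top y$, produce exactly the two binding bounds $\lambda_{k+1}^+$ and $\lambda_{k-1}$ on $\eta^\top y$, with all remaining inequalities being constraints on $z$ alone. The sign-of-$(Ac)_j$ bookkeeping in Proposition~1 must be carried out carefully, especially for the cross-step inequalities indexed by $\ell\ne k$, to ensure none of them tightens the interval beyond $[\lambda_{k+1}^+,\lambda_{k-1}]$.
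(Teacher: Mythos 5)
The paper itself gives no proof of this proposition: it is imported verbatim from \citet{TibshiraniPSI}, so the only fair comparison is with the argument in that reference, whose architecture (polyhedral lemma $\to$ truncated normal $\to$ probability integral transform) you have reproduced correctly. Your Step~3 algebra checks out: with $\mu=0$, standard deviation $\sigma/\omega_k$, and truncation interval $[\lambda_{k+1}^+,\lambda_{k-1}]$, one indeed gets $T_k=1-F^{[\lambda_{k+1}^+,\lambda_{k-1}]}_{0,\sigma^2\|\eta\|_2^2}(\eta^\top y)$, and uniformity follows.

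The genuine gap is Step~2, which you flag as the ``main obstacle'' but do not close --- and it is not mere bookkeeping; it is the entire content of the theorem. Three things need to be proved, not asserted. First, the chain link $c_{k-1}(j_{k-1},s_{k-1})^\top y\ge\eta^\top y$ yields the upper endpoint \emph{literally equal to} $\lambda_{k-1}$ only because $c_{k-1}(j_{k-1},s_{k-1})^\top\eta=0$: each $c_\ell(j_\ell,s_\ell)$ with $\ell\le k-1$ is proportional to $P^\perp_{\mm_{\ell-1}}X_{j_\ell}\in\mathrm{col}(X_{\mm_{k-1}})$, while $\eta\propto P^\perp_{\mm_{k-1}}X_{j_k}$ is orthogonal to that column space; without this, the Lee--Sun--Sun--Taylor ratio $\bigl(b_j-(Az)_j\bigr)/(Ac)_j$ would give $(\lambda_{k-1}-a\lambda_k)/(1-a)\neq\lambda_{k-1}$. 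Second, each competitor inequality in \eqref{eqjoken2} becomes a \emph{lower} bound on $\eta^\top y$ precisely because the membership condition $c_{k+1}(j,s)^\top\eta<\|\eta\|_2^2$ is built into the definition of $S_k^+$; you need to invoke that condition to justify dividing by $1-a_{k+1}(j,s)>0$, and then to identify $\max_{(j,s)\in S_k^+}u_{j,s}$ with $\lambda_{k+1}^+$ as defined in the paper. Third, your claim that the cross-step inequalities \eqref{eqjoken3} ``contribute to $\nu^0(z)$ or are dominated'' is exactly what must be verified: those leaving-side constraints involve $c_{l+1}(j,s)$ for inactive $j$, which are \emph{not} orthogonal to $\eta$ in general, and one must show (using the sign condition defining $S_l^-$) that they produce upper bounds no tighter than $\lambda_{k-1}$. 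Until these three points are established, the identification $[\nu^-(z),\nu^+(z)]=[\lambda_{k+1}^+,\lambda_{k-1}]$ --- and hence the proposition --- remains unproved; the remainder of your argument is a correct but routine consequence of it.
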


\section{PSI for the one-dimensional fused lasso}\label{sec:psi-1dfl}

Our main contribution is to show that the fused lasso admits a \emph{spacing geometry} entirely analogous to Section~\ref{subsec:spacing} once we pass to an {equivalent} system. 

\subsection{Spacing test for fused lasso}\label{subsec:fl-spacing}

Recall the fused lasso problem \eqref{1dfl} with the first-difference operator $D\in\mathbb{R}^{(n-1)\times n}$.
Augment $D$ by the row $d^\top:=[1,\ldots,1]$ to obtain the invertible
\[
\tilde D:=\begin{bmatrix} D\\ d^\top\end{bmatrix}\in\mathbb{R}^{n\times n},\qquad
\Theta:=\tilde D\,\mu=\begin{bmatrix}\phi\\ \varphi\end{bmatrix}
=\begin{bmatrix}D\mu\\ d^\top\mu\end{bmatrix}.
\]
Writing $\tilde D^{-1}=[\,X_1\;\;X_2\,]$ with $X_1\in\mathbb{R}^{n\times(n-1)}$ and $X_2\in\mathbb{R}^{n\times 1}$,
the objective \eqref{1dfl} becomes
\begin{align}
\hat\Theta(\lambda)\in\argmin_{\Theta\in\mathbb{R}^n}\;
\frac12\bigl\|y-\tilde D^{-1}\Theta\bigr\|_2^2+\lambda\|\phi\|_1.\label{eq:fl-reparam}
\end{align}
The unpenalized coordinate $\varphi$ admits the closed form
\[
\hat\varphi(\phi)=(X_2^\top X_2)^{-1}X_2^\top\bigl(y-X_1\phi\bigr),
\]
so that \eqref{eq:fl-reparam} reduces to an \emph{ordinary lasso} after projecting out the column space of $X_2$:
\begin{align}
\hat\phi(\lambda)\in\argmin_{\phi\in\mathbb{R}^{n-1}}\;
\frac12\bigl\|\,\tilde y-\tilde X\,\phi\,\bigr\|_2^2+\lambda\|\phi\|_1,
\label{eq:lasso-tilde}
\end{align}
where
\[
\tilde y:=(I-P_2)y,\qquad \tilde X:=(I-P_2)X_1,\qquad P_2:=X_2(X_2^\top X_2)^{-1}X_2^\top.
\]
Therefore, the standard LARS algorithm (Section~2.1) applies to \eqref{eq:lasso-tilde} with the replacements
\[
(y,X)\ \leadsto\ (\tilde y,\tilde X).
\]
Let $\tilde r(\lambda):=\tilde y-\tilde X\hat\phi(\lambda)$ denote the residual, and let the LARS knots be
\[
\lambda_1>\lambda_2>\cdots,\qquad
\lambda_k:=\max_j\bigl|\,\tilde X_j^\top \tilde r(\lambda_k)\,\bigr|,
\]
with active sets $\mm_k=\{j_1,\ldots,j_k\}$ and signs $s_k\in\{-1,1\}$ at entry. Exactly as in \eqref{eq122}-\eqref{eq107}, the $k$-th entry condition admits a linear form
\begin{align}
\tilde X_{j_k}^\top \tilde r_k=s_k\,\lambda_k
\quad\Longleftrightarrow\quad
\tilde{c}_k(j_k,s_k)^\top \tilde y=\lambda_k, \label{eq:fl-entry}
\end{align}
where, with $A:=\mm_{k-1}$,
\begin{align}
\tilde P_A:=\tilde X_A(\tilde X_A^\top \tilde X_A)^{-1}\tilde X_A^\top,\qquad
\tilde{c}_k(j,s):=\frac{(I-\tilde P_A)\,\tilde X_j}{\,s-\tilde X_j^\top \tilde X_A(\tilde X_A^\top \tilde X_A)^{-1}s_A\,},
\label{eq:ckF}
\end{align}
and $s_A\in\{\pm1\}^{|A|}$ denotes the sign vector of the current active set.
Thus the fused-lasso LARS is the ordinary LARS in the $(\tilde y,\tilde X)$ system. In particular, the selection event “we reached step $k$ with entrant $(j_k,s_k)$” is polyhedral in $\tilde y$, with the same monotone chain and envelope forms as \eqref{eqjoken1}-\eqref{eqjoken3} (all occurrences of $y$ and $c_k$ replaced by $\tilde y$ and $\tilde{c}_k$).

\medskip
Since $\phi=D\mu$ and $(D\mu)_i=\mu_{i+1}-\mu_i$, activating index $j$ corresponds to creating a new jump (change-point) between adjacent means. As $\lambda$ decreases, the fused-lasso path introduces jumps one by one, exactly mirroring how linear-regression LARS adds coefficients—now in the geometry of adjacent differences.

We now construct the selective spacing test for \eqref{eq:lasso-tilde} similarly to the \emph{ordinary} LARS. 
The test statistic will be given by (\ref{eqtklinear}).

Because the fused-lasso parameter at entry $j_k$ is the adjacent difference, the null can be written as
\[
H_0:\ \eta^\top\mu=0
\ \Longleftrightarrow\
H_0:\ (\mu_{j_k+1}-\mu_{j_k})=0
\ \Longleftrightarrow\
H_0:\ (D\mu)_{j_k}=0.
\]
Operationally, $T_k$ answers: “conditional on having arrived at step $k$, is the newly proposed jump almost indistinguishable from the current front-runners?”—precisely the situation in which spacing is small relative to headroom.

\subsection{Solution paths: equivalence of entry order and knots}\label{subsec:order}

Let $\tilde y=(I-P_2)y$, $\tilde X=(I-P_2)X_1$, with $P_2=\tfrac{1}{n}{\bf1}_n{\bf1}_n^\top$ and
$X_1^\top{\bf1}_n=0$. For any residual $r$ along the path (primal domain) and
$\tilde r=(I-P_2)r$ (reparametrized domain), we have
\begin{equation}\label{eq:key-equality}
\tilde X_j^\top \tilde r
= X_{1,j}^\top (I-P_2)^\top (I-P_2) r
= X_{1,j}^\top \bigl(r-\bar r\,{\bf1}_n\bigr)
= X_{1,j}^\top r
\end{equation}
for all $j$ since $(I\!-\!P_2)$ is an orthogonal projector and $X_{1,j}^\top{\bf1}_n=0$.
Hence the LARS entry criterion (largest absolute correlation) is identical in the two
representations:
\[
\arg\max_j \bigl|\tilde X_j^\top \tilde r\bigr| \;=\;
\arg\max_j \bigl|X_{1,j}^\top r\bigr|.
\]

\begin{assumption}[Generic position]\label{ass:path}
At each step $k$, the maximizer of $|\tilde X_j^\top \tilde r^{(k-1)}|$ over inactive $j$
is unique (no ties).
\end{assumption}

\begin{theorem}[Order preservation and knot identity]\label{thm:order-compact}
Let $\{(\lambda_k,j_k,s_k)\}_{k\ge1}$ be the LARS sequence for $(\tilde y,\tilde X)$ in
\eqref{eq:lasso-tilde}.
Under Assumption~\ref{ass:path},
along the fused-lasso primal path \eqref{1dfl}, we have
\[
\boxed{\ j_k^{\mathrm{LARS}} = j_k^{\mathrm{FLSA}}, \qquad
s_k^{\mathrm{LARS}} = s_k^{\mathrm{FLSA}}, \qquad
\lambda_k^{\mathrm{LARS}} = \lambda_k^{\mathrm{FLSA}} \ }.
\]

Equivalently, the order in which adjacent differences $(D\mu)_j$ become nonzero
and the corresponding knot values are the same in both systems.
\end{theorem}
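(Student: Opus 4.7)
The plan is to reduce the claim to a single structural fact: the LARS sequence for $(\tilde y,\tilde X)$ and the FLSA primal path describe the same solution in two coordinate systems. Starting from the bijection $\mu\mapsto \Theta=\tilde D\mu=(\phi,\varphi)$ and the identity $\mu=X_1\phi+X_2\varphi$, the original objective \eqref{1dfl} equals the reparametrized objective \eqref{eq:fl-reparam}, so minimizers correspond exactly via $\hat\mu^{\mathrm{FLSA}}(\lambda)=X_1\hat\phi(\lambda)+X_2\hat\varphi(\lambda)$; the primal FLSA path is therefore entirely determined by $\hat\phi(\lambda)$, since once $\hat\phi$ is known the unpenalized coordinate $\hat\varphi$ is pinned down by least squares on $X_2$.

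Next I would profile out the unpenalized coordinate: first-order optimality in $\varphi$ gives $\hat\varphi(\lambda)=(X_2^\top X_2)^{-1}X_2^\top(y-X_1\hat\phi(\lambda))$, and substituting back shows that $\hat\phi(\lambda)$ minimizes the ordinary lasso \eqref{eq:lasso-tilde} with data $(\tilde y,\tilde X)$. Since activating index $j$ in the $\phi$-coordinates is precisely $(D\hat\mu(\lambda))_j\neq 0$, the set of active jumps and the $\lambda$-values at which they appear are identical in the two representations. Under Assumption~\ref{ass:path}, the argmax that defines each LARS knot is unique, and by the key equality \eqref{eq:key-equality} the LARS entry index and sign read in the $(\tilde y,\tilde X)$ system match those read from the primal residual $r(\lambda)=y-\hat\mu(\lambda)$. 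The only remaining step is then to identify the LARS sequence $\{(\lambda_k,j_k,s_k)\}$ with the lasso path for \eqref{eq:lasso-tilde}.

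That last identification rests on the no-leave property of the one-dimensional fused lasso: once an adjacent difference $(D\hat\mu)_j$ becomes nonzero, it remains nonzero as $\lambda$ decreases. Granting this property (which the paper establishes through its one-sided entry geometry), LARS on $(\tilde y,\tilde X)$ coincides with the lasso path, and we conclude $j_k^{\mathrm{LARS}}=j_k^{\mathrm{FLSA}}$ and $\lambda_k^{\mathrm{LARS}}=\lambda_k^{\mathrm{FLSA}}$. The sign match follows immediately from $s_k^{\mathrm{LARS}}=\mathrm{sign}(\tilde X_{j_k}^\top \tilde r(\lambda_k))=\mathrm{sign}(X_{1,j_k}^\top r(\lambda_k))$ by \eqref{eq:key-equality}, which is the sign of the jump $(D\hat\mu)_{j_k}$ born just below $\lambda_k$. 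The hard part is pinning down the no-leave claim crisply: without it, LARS and lasso paths can diverge and one is forced back to the hit/leave bookkeeping of the generalized lasso. Tie-handling at a knot is a lesser concern and is dispatched by Assumption~\ref{ass:path}.
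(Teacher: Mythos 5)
Your proposal is correct and follows essentially the same route as the paper's own proof: reparametrize to the ordinary lasso \eqref{eq:lasso-tilde}, use the key equality \eqref{eq:key-equality} to match the entry index, sign, and knot value in the two systems, invoke Assumption~\ref{ass:path} for uniqueness, and appeal to the no-leave property of the 1D fused lasso to identify the LARS sequence with the primal path. You are right that the no-leave claim is the load-bearing step; note that the paper's proof likewise asserts it without an independent argument, so your candor about ``granting'' it matches the actual state of the written proof rather than falling short of it.
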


\begin{proof}
By \eqref{eq:key-equality}, we have 
\[
\arg\max_j \bigl|\tilde X_j^\top \tilde r^{(k-1)}\bigr|
=\arg\max_j \bigl|X_{1,j}^\top r^{(k-1)}\bigr|
=: j_k,
\qquad
s_k=\mathrm{sign}\bigl(\tilde X_{j_k}^\top \tilde r^{(k-1)}\bigr)
=\mathrm{sign}\bigl(X_{1,j_k}^\top r^{(k-1)}\bigr).
\]
Assumption~\ref{ass:path} ensures uniqueness of $j_k$ at each step.
In 1D FLSA there are no leave events, so once $(D\mu)_{j_k}\neq0$ it stays nonzero;
hence the entry order is preserved.
For the knot values,
\[
\lambda_k^{\mathrm{LARS}}
=\max_j \bigl|\tilde X_j^\top \tilde r^{(k-1)}\bigr|
=\max_j \bigl|X_{1,j}^\top r^{(k-1)}\bigr|
=\lambda_k^{\mathrm{FLSA}},
\]
which proves the identities.
\end{proof}

\noindent

Figure~\ref{fig:lars} illustrates how the coefficient vector $\beta(\lambda)$ evolves along the LARS path as the regularization parameter $\lambda$ decreases. 
When $\lambda$ is sufficiently large, no changepoint appears and the solution is constant. 
As $\lambda$ decreases and passes through the successive knot values
\[
+\infty=\lambda_0>\lambda_1>\lambda_2>\cdots>\lambda_M>\lambda_{M+1}:=0,
\]
new changepoints enter one by one: for example, there is one changepoint for $\lambda_2<\lambda<\lambda_1$, two for $\lambda_3<\lambda<\lambda_2$, and so on. 

\begin{corollary}\label{cor:lambda-knots}
If the fused-lasso solution corresponding to a fixed $\lambda=\lambda^\ast>0$ has exactly $M$ changepoints, then it must satisfy
\[
\lambda^\ast\in(\lambda_{M+1},\,\lambda_M],
\]
where these $\{\lambda_k\}$ coincide with the LARS knots for $(\tilde y,\tilde X)$ obtained from the reparameterized problem~(2.5).
\end{corollary}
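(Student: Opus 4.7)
The plan is to derive Corollary~\ref{cor:lambda-knots} as an essentially immediate consequence of Theorem~\ref{thm:order-compact}, combined with the ``no leave events'' structural property of the one-dimensional fused lasso. Both ingredients are already in hand, so the argument reduces to a monotone counting argument over the knot sequence.

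First I would invoke Theorem~\ref{thm:order-compact} to identify the LARS knots of the reparametrized problem \eqref{eq:lasso-tilde} with the FLSA knots of the primal path \eqref{1dfl}; this lets us describe the support of $D\hat\mu(\lambda)$ entirely in terms of the common sequence $\lambda_1>\lambda_2>\cdots$. Next I would use the fact, already invoked in the proof of Theorem~\ref{thm:order-compact}, that in 1D FLSA no active coordinate of $D\mu$ can ever leave the support as $\lambda$ decreases. Under Assumption~\ref{ass:path}, at each knot $\lambda_k$ exactly one new coordinate enters. Consequently, the support size
\[
M(\lambda) := \#\bigl\{\,j : (D\hat\mu(\lambda))_j \neq 0\,\bigr\}
\]
is a monotone nondecreasing, integer-valued step function of the decreasing parameter $\lambda$, jumping by exactly one at each $\lambda_k$.

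It then remains to identify the interval on which $M(\lambda)=M$. Equivalently, one counts the knots encountered as $\lambda$ descends from $+\infty$; the identity $M(\lambda) = \#\{k : \lambda \le \lambda_k\}$ holds, and the right-hand side equals $M$ precisely when $\lambda_{M+1} < \lambda \le \lambda_M$. This yields $\lambda^\ast \in (\lambda_{M+1},\lambda_M]$ as claimed. If the FLSA path terminates at step $M$ (no further knots before $0$), one takes $\lambda_{M+1}:=0$ and the interval degenerates to $(0,\lambda_M]$, still consistent with the statement.

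The only obstacle is a small bookkeeping point about the endpoint convention: one must verify that at $\lambda=\lambda_M$ the $M$-th coordinate counts as already active, while at $\lambda=\lambda_{M+1}$ the $(M+1)$-th coordinate has entered. This follows from the LARS convention fixed in \eqref{eq:fl-entry} and the subsequent construction, which attaches the active set $\mm_k$ to the half-open interval $(\lambda_{k+1},\lambda_k]$ so that the step function $M(\lambda)$ is right-continuous in the decreasing direction. Apart from tracking this convention at the two endpoints, no further computation is required; I do not anticipate any substantive difficulty.
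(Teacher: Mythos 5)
Your argument is correct and coincides with the paper's (implicit) justification: the paper states the corollary with no separate proof, relying exactly on Theorem~\ref{thm:order-compact} together with the one-at-a-time entry under Assumption~\ref{ass:path} and the absence of leave events, so that the changepoint count is a step function that increments by one at each knot as $\lambda$ decreases --- precisely your counting identity. The only delicate point is the endpoint you flag: at $\lambda=\lambda_M$ the entering coefficient is exactly zero, so a strict nonzero-support count would assign $\lambda_M$ to the $(M-1)$-changepoint regime; the paper itself is inconsistent here (its surrounding text uses open intervals $\lambda_{k+1}<\lambda<\lambda_k$ while the corollary closes the interval at $\lambda_M$), so your convention-based resolution is no weaker than what the paper offers.
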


The above result formalizes the observation in Figure~\ref{fig:lars} that the number of changepoints increases by one each time $\lambda$ crosses a knot along the LARS path.

\subsection{Simplifications}\label{subsec:simplify}

In the 1D chain, each coordinate of $\phi=D\mu$ is the adjacent difference
$(D\mu)_j=\mu_{j+1}-\mu_j$, i.e., a potential changepoint.
As $\lambda$ decreases, changepoints enter one by one; in the 1D fused lasso
there are no leave events under general position (ties do not occur).
This monotonicity has a simple but important consequence for post-selection inference:
\emph{the lower truncation endpoint at step $k$ is exactly the next knot $\lambda_{k+1}$}.
Equivalently, the “effective” competitor that limits the statistic from below
is precisely the variable that will enter at the next step.

\begin{theorem}[Lower endpoint equals the next knot]\label{thm:a-equals-next-knot}
Under Assumption~\ref{ass:path}, at step $k$ of the reparametrized LARS
for \eqref{eq:lasso-tilde},
the lower truncation endpoint (the strongest still-inactive competitor)
satisfies
\[
\lambda_{k+1}^+\ =\ \lambda_{k+1}.
\]
Consequently, the selection event can be written with a \emph{single} lower bound:
\begin{equation}\label{eq:chain-plus-next}
\tilde{c}_1(j_1,s_1)^\top \tilde{y}
\ \ge\ \cdots\ \ge\
\tilde{c}_k(j_k,s_k)^\top \tilde{y}
\ \ge\ \lambda_{k+1}.
\end{equation}
\end{theorem}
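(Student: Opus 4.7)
The plan is to reduce Theorem~\ref{thm:a-equals-next-knot} to the monotone-LASSO structure of the one-dimensional fused lasso path in the reparametrized variables $(\tilde y,\tilde X)$. Specifically, I would argue that under Assumption~\ref{ass:path} the problem \eqref{eq:lasso-tilde} has the property that once a coordinate $\hat\phi_j(\lambda)$ becomes nonzero, it remains nonzero with the same sign for all smaller $\lambda$. This is the ``no leave events'' property, which makes the LARS path coincide with the LASSO path and collapses the hit/leave polyhedral machinery of \citet{Hyun2018} to the hit side only.

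The first step is to establish this no-leave property. The 1D fused lasso admits a taut-string (equivalently, dual-path) description: the solution at any $\lambda$ corresponds to a partition of $\{1,\ldots,n\}$ into maximal constant blocks, and the partition is monotonically refined as $\lambda$ decreases, so a changepoint that appears at some $\lambda^\ast$ can never be absorbed into a neighboring block for smaller $\lambda$. Translated into the reparametrized LARS system, $\mathrm{sign}(\hat\phi_j(\lambda))$ is preserved on the single interval $(0,\lambda_j^\ast]$ on which $\hat\phi_j$ is nonzero. I would either cite the dual-path construction of \citet{TibshiraniTaylor2011}, whose combinatorial structure for the 1D chain makes this transparent, or give a direct KKT-based perturbation argument: piecewise linearity of the path combined with Assumption~\ref{ass:path} (no ties) means that at every breakpoint one verifies that only a new variable enters, never a sign cross.

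The second step deduces $\lambda_{k+1}^+=\lambda_{k+1}$. By definition, $\lambda_{k+1}$ is the largest value below $\lambda_k$ at which some $(j,s)$ with $j\notin\mm_k$ attains $\tilde{c}_{k+1}(j,s)^\top\tilde y=\lambda$. Step one forces the next event to be a hit, so the argmax pair $(j_{k+1},s_{k+1})$ must satisfy the hit-side inequality $\tilde{c}_{k+1}(j,s)^\top\tilde{c}_k(j_k,s_k)<\|\tilde{c}_k(j_k,s_k)\|_2^2$ and therefore lies in $S_k^+$. Hence
\[
\lambda_{k+1}^+\;=\;\max_{(j,s)\in S_k^+}\tilde{c}_{k+1}(j,s)^\top\tilde y\;=\;\tilde{c}_{k+1}(j_{k+1},s_{k+1})^\top\tilde y\;=\;\lambda_{k+1}.
\]
Applying the same reasoning at every previous step $l\le k$ shows that each $S_l^-$ is either empty or contributes no binding constraint, so the upper-envelope inequalities \eqref{eqjoken3} drop out and the polyhedron collapses to the chain \eqref{eq:chain-plus-next}.

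The main obstacle is the rigorous justification of the no-leave property. The intuition that changepoints only split and never merge is clear, but converting it into a formal sign-preservation statement for $\hat\phi_j(\lambda)$ requires either invoking the explicit combinatorial description of the 1D dual-path algorithm, or a self-contained perturbation argument at every breakpoint of the piecewise linear path, both of which rely on Assumption~\ref{ass:path} to rule out degenerate ties.
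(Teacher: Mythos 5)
Your proposal is correct in its logical skeleton and rests on the same pivot as the paper: the absence of leave events on the 1D fused-lasso path forces the selection polyhedron to be one-sided, so the hitting-side envelope is the only active lower constraint and it is attained by the next entrant. The execution differs in two ways worth noting. First, the paper does not prove the no-leave property at all---it is simply asserted inside the proof of Theorem~\ref{thm:order-compact} (``In 1D FLSA there are no leave events'') and then cited; your plan to derive it from the taut-string/dual-path description of \citet{TibshiraniTaylor2011} or a KKT perturbation argument supplies a justification the paper omits, and you are right that this is where the real work lies. Second, the paper's proof of Theorem~\ref{thm:a-equals-next-knot} routes through the polyhedral-lemma mechanics: it decomposes $\tilde y = z + u\,\eta/\|\eta\|_2^2$, rewrites each competitor constraint as $u \ge b_{k+1}(j,s) + a_{k+1}(j,s)\,u$, and identifies the truncation endpoint with the fixed point $u_{j,s} = b_{k+1}(j,s)/(1-a_{k+1}(j,s))$ before arguing that the largest such fixed point is $\lambda_{k+1}$. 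You instead work directly with the definitional quantity $\lambda_{k+1}^+ = \max_{(j,s)\in S_k^+}\tilde c_{k+1}(j,s)^\top\tilde y$ and observe that, once the next event is known to be a hit, this max is the next knot essentially by the LARS step rule. That is adequate for the theorem as literally stated, but be aware that the endpoint that enters the truncated-normal law of $\eta^\top\tilde y$ (and hence the exactness claim for $T_k$ in \eqref{eq:Tk-final}) is the fixed-point quantity $u_{j,s}$, not the raw value $\tilde c_{k+1}(j,s)^\top\tilde y$; these coincide with $\lambda_{k+1}$ only after the argument the paper makes in its ``identification with the next knot'' step, so a complete write-up should close that loop rather than stop at the definitional identity.
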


\begin{proof}[Proof]
Set $\eta:=\tilde c_k(j_k,s_k)$ and $u:=\eta^\top\tilde y$. By the orthogonal decomposition \eqref{zteigi},
\[
\tilde y \;=\; z \;+\; \frac{u}{\|\eta\|_2^2}\,\eta,\qquad z\perp\eta.
\]
Hence any linear form in the selection constraints is affine in $u$:
\[
\tilde c_k(j,s)^\top\tilde y \;=\; b_k(j,s)+a_k(j,s)\,u,\qquad
a_k(j,s):=\frac{\tilde c_k(j,s)^\top\eta}{\|\eta\|_2^2},\quad
b_k(j,s):=\tilde c_k(j,s)^\top z,
\]
with $z$ fixed by conditioning.

\emph{Upper endpoint.}
From the monotone chain \eqref{eqjoken1},
\[
u=\tilde c_k(j_k,s_k)^\top\tilde y\ \le\ \tilde c_{k-1}(j_{k-1},s_{k-1})^\top\tilde y,
\]
so the upper truncation is $\lambda_{k-1}$.

\emph{Lower endpoint.}
The hitting-side envelope \eqref{eqjoken2} yields
\[
u \ \ge\ \max_{(j,s)\in S_k^+}\ \tilde c_{k+1}(j,s)^\top\tilde y
\ =\ \max_{(j,s)\in S_k^+}\ \bigl\{\,b_{k+1}(j,s)+a_{k+1}(j,s)\,u\,\bigr\}.
\]
By the definition of $S_k^+$ and generic position, $a_{k+1}(j,s)<1$ for all $(j,s)\in S_k^+$ (see the definition in Section 2.3), hence each inequality is equivalent to
\[
u\ \ge\ u_{j,s}:=\frac{b_{k+1}(j,s)}{\,1-a_{k+1}(j,s)\,}.
\]
Thus the effective lower truncation is
\[
\lambda_{k+1}^+\ =\ \max_{(j,s)\in S_k^+} u_{j,s}.
\]

\emph{Identification with the next knot.}
At entry, $u=\eta^\top\tilde y=\lambda_k$ by \eqref{eq:fl-entry}. As $\lambda$ decreases, $u$ decreases continuously; a competitor $(j,s)$ ties the front exactly at $u=u_{j,s}$. The first tie occurs at $\max_{(j,s)\in S_k^+}u_{j,s}$, which equals the next entry level $\lambda_{k+1}$ by order preservation and the absence of leave events in 1D (Theorem~\ref{thm:order-compact}). Hence
\[
\lambda_{k+1}^+=\lambda_{k+1}.
\]
Therefore the selection event reduces to the one-sided form
\[
\tilde c_1(j_1,s_1)^\top\tilde y\ \ge\ \cdots\ \ge\ \tilde c_k(j_k,s_k)^\top\tilde y\ \ge\ \lambda_{k+1},
\]
which is \eqref{eq:chain-plus-next}.
\end{proof}

\paragraph{Consequence for the spacing statistic.}
Let $\eta=\tilde c_k(j_k,s_k)$ and $\omega_k:=\|\eta\|_2^{-1}$.
Under Assumption~\ref{ass:path},
since the feasible region for $\eta^\top y$ is now fully described by these two knots,
the truncated normal interval is $[\lambda_{k+1},\lambda_{k-1}]$.
With the lower endpoint identified as the \emph{next knot} $\lambda_{k+1}$,
the spacing-type pivot has the closed form.
\begin{equation}\label{eq:Tk-final}
T_k\;=\;
\frac{\Phi\!\bigl(\lambda_{k-1}\,\omega_k/\sigma\bigr)-\Phi\!\bigl(\lambda_k\,\omega_k/\sigma\bigr)}
     {\Phi\!\bigl(\lambda_{k-1}\,\omega_k/\sigma\bigr)-\Phi\!\bigl(\lambda_{k+1}\,\omega_k/\sigma\bigr)}.
\end{equation}
In words: once we condition on “having reached step $k$ with entrant $(j_k,s_k)$,”
the relevant headroom for $\eta^\top y$ is exactly from the previous knot (the ceiling)
down to the \emph{next} knot (the floor), and no additional inequalities are active in 1D.
By the polyhedral lemma, $T_k\sim U[0,1]$ under $H_0:\eta^\top\mu=0$.

\begin{remark}[Exactness of the conservative spacing test]
In general, the statistic in \eqref{eq:Tk-final} is called the
\emph{conservative spacing test} \citep{TibshiraniPSI},
because the true upper truncation point $\lambda_{k+1}^+$
involves solving a polyhedral system and is approximated
by the next knot $\lambda_{k+1}$.
This substitution typically yields a conservative $p$-value.
However, in the one-dimensional fused lasso, the path exhibits
no ``leave'' events and hence the upper truncation boundary
disappears.
As a result, $\lambda_{k+1}^+=\lambda_{k+1}$ exactly,
and the conservative statistic in \eqref{eq:Tk-final}
coincides with the \emph{exact} selective $p$-value.
To our knowledge, this is the first non-trivial example in which
the conservative spacing test becomes theoretically exact,
providing a rare closed-form example of an exact post-selection
pivot in the generalized lasso family.
\end{remark}

\subsection{CUSUM representation of knot values}
While the main results rely only on order preservation and monotonicity,
the following identity provides a direct link between the fused-lasso path
and classical CUSUM statistics used in change-point analysis.

The following result provides for intuition and for computational convenience, connecting to classical cumulative-sum (CUSUM)-based change-point methods.
This CUSUM characterization dates back to the
classical changepoint literature \citep{Page1954,Brown1975,Killick2012},
where the CUSUM of residuals or squares were used to detect structural breaks.

\begin{theorem}[Knot identity and one-sided selection]\label{thm:knot-one-sided}
Let $r^{(k-1)}:= (I - P_{\mm_{k-1}}) y$ with $r^{(0)}=y$ be the segmentwise
residual after fixing ${\mathcal M}_{k-1}=\{j_1,\dots,j_{k-1}\}$ and cumulative sum $C_j^{(k-1)}:=\sum_{i=1}^j r^{(k-1)}_i$ ($j\not\in {\mathcal M}_{k-1}$).
Then, under Assumption 1, 
\[
\lambda_k=\max_{j\not\in \mm_{k-1}}\,|C_j^{(k-1)}|,\qquad
j_k=\arg\max_{j\not\in {\mathcal M}_{k-1}} |C_j^{(k-1)}|,\qquad
s_k=\mathrm{sign}\bigl(C_{j_k}^{(k-1)}\bigr).
\]
Consequently, at $\lambda=\lambda_k$ a changepoint enters at $j_k$ with sign $s_k$.
\end{theorem}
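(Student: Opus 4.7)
The plan is to import the LARS-knot characterization from Theorem~\ref{thm:order-compact} into the primal representation and then interpret the resulting linear functionals as CUSUM statistics on the segmentwise residual. By Theorem~\ref{thm:order-compact} combined with the identity~\eqref{eq:key-equality} (namely $\tilde X_j^\top\tilde r=X_{1,j}^\top r$), it suffices to show that the LARS entry criterion in the reparametrized system agrees, up to a global sign convention, with the CUSUM statement $|C_{j_k}^{(k-1)}|=\lambda_k$ in the claim, with matching argmax and sign.

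First, I would compute $X_1$ in closed form. Solving $D X_1=I_{n-1}$ together with $\mathbf 1^\top X_1=0$ yields the centered step-function basis $(X_{1,j})_i=-(n-j)/n$ for $i\le j$ and $(X_{1,j})_i=j/n$ for $i>j$, and a direct summation gives
\[
X_{1,j}^\top v \;=\; -\,C_j(v)+\tfrac{j}{n}\,S(v),\qquad C_j(v):=\sum_{i\le j}v_i,\ \ S(v):=\sum_{i=1}^n v_i,
\]
for every $v\in\mathbb R^n$.

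Next, I would specialize to $v=r^{(k-1)}=(I-P_{\mm_{k-1}})y$. Because the segmentwise projector contains both the all-ones vector and the block indicators of $\mm_{k-1}$, the residual satisfies $S(r^{(k-1)})=0$ and $C_j(r^{(k-1)})=0$ at every block endpoint $j\in\mm_{k-1}$ (each centered block sums to zero). The identity above therefore collapses to $X_{1,j}^\top r^{(k-1)}=-C_j^{(k-1)}$ for all $j\notin\mm_{k-1}$, so magnitudes match. Combining this with Theorem~\ref{thm:order-compact} and~\eqref{eq:key-equality} yields
\[
\lambda_k \;=\; \max_{j\notin\mm_{k-1}}\bigl|\tilde X_j^\top\tilde r^{(k-1)}\bigr| \;=\; \max_{j\notin\mm_{k-1}}|C_j^{(k-1)}|,
\]
identifies $j_k$ with the argmax, and recovers $s_k=\mathrm{sign}(\tilde X_{j_k}^\top\tilde r^{(k-1)})=\mathrm{sign}(C_{j_k}^{(k-1)})$ after absorbing the leading minus sign into the sign convention inherited from $\tilde D$.

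The main obstacle I expect lies in aligning the LARS residual at the instant of the $k$-th entry with the segmentwise projection $(I-P_{\mm_{k-1}})y$: the two differ by a $\lambda$-linear correction supported in the span of the currently active columns, so in general the LARS entry formula $\tilde c_k(j,s)^\top\tilde y$ carries the nontrivial denominator $s-\tilde X_j^\top\tilde X_A(\tilde X_A^\top\tilde X_A)^{-1}s_A$. Under Assumption~\ref{ass:path}, the block-constant structure of the active step functions in 1D and the absence of leave events (Theorem~\ref{thm:a-equals-next-knot}) must be invoked to show that the winning value of this ratio on the inactive set reduces to the CUSUM magnitude $|C_j^{(k-1)}|$, preserving the argmax and its sign. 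Carrying out this denominator simplification rigorously and tracking the signs cleanly is the crux of the proof; once that is done, the CUSUM identity in the theorem follows immediately from the algebraic computation of $X_1$ described above.
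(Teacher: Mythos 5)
Your algebraic groundwork is correct and is essentially the paper's Appendix~C made explicit: the closed form $(X_{1,j})_i=-(n-j)/n$ for $i\le j$ and $(X_{1,j})_i=j/n$ for $i>j$, the identity $X_{1,j}^\top v=-C_j(v)+\tfrac{j}{n}S(v)$, and the specialization $X_{1,j}^\top r^{(k-1)}=-C_j^{(k-1)}$ all check out, and together with \eqref{eq:key-equality} they prove the theorem for $k=1$ (where, like the paper, you must silently read $r^{(0)}$ as the demeaned $y-\bar y\,{\bf 1}_n$ rather than the stated $r^{(0)}=y$). The genuine gap is precisely the step you defer to the final paragraph as ``the crux'' and never carry out. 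For $k\ge 2$ the LARS knot is not $\max_j|\tilde X_j^\top(I-\tilde P_A)\tilde y|$; by \eqref{eq:ckF} it is
\[
\lambda_k=\max_{(j,s)}\ \frac{\tilde X_j^\top(I-\tilde P_A)\tilde y}{\,s-\tilde X_j^\top\tilde X_A(\tilde X_A^\top\tilde X_A)^{-1}s_A\,},
\]
and for the step-function design the denominator does not reduce to $\pm1$. Your own identity shows the numerator equals $|C_j^{(k-1)}|$, so the claimed knot formula holds only if the denominator term $a_j:=\tilde X_j^\top\tilde X_A(\tilde X_A^\top\tilde X_A)^{-1}s_A$ vanishes, which it does not. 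Concretely, take $n=3$, $y=(0,1,3)$: then $j_1=2$, $\lambda_1=5/3$, $r^{(1)}=(-1/2,\,1/2,\,0)$, so $\max_{j\notin\mm_1}|C_j^{(1)}|=1/2$, whereas both the LARS recursion and the fused-lasso dual give $\lambda_2=1$ (the competitor's dual coordinate is the affine function $u_1(\lambda)=(1+\lambda)/2$, and the knot is where it crosses $\lambda$, namely $b/(1-a)=1$, not its intercept $b=1/2$).

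So the step you describe as a ``denominator simplification'' is not a technicality to be tidied up: it is where the argument, and the stated identity $\lambda_k=\max_j|C_j^{(k-1)}|$, actually fail for $k\ge2$; since $a_j$ varies with $j$, even the argmax and sign claims are not guaranteed to survive. To be fair, the paper's own proof has the identical hole---it verifies $k=1$ and then says ``repeating the same argument within each fused segment,'' ignoring the same denominator---so you have correctly located the weak point of the published argument; but your assertion that the theorem ``follows immediately'' once the denominator is handled is unwarranted, because handling it honestly yields $\lambda_k=|C_{j}^{(k-1)}|/(1-a_j)$ rather than the bare CUSUM magnitude. A correct statement must either keep this ratio explicitly or redefine $C^{(k-1)}$ so as to absorb the factor $(1-a_j)^{-1}$.
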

For the proof, see Appendix C.


\section{Validation and Computational Comparison}\label{sec:experiments}

\subsection{Equivalence of inference outputs}\label{subsec:eq}
We numerically confirm that, in the one-dimensional fused lasso,
our one-sided lower-envelope representation produces the same
selective inference quantities as the generalized-lasso
hit/leave conditioning of \citet{Hyun2018}.
For each replicate ($n=100$) under $H_0$,
we generated $y_i\sim N(0,1)$ and ran the reparametrized LARS on
$(\tilde y,\tilde X)$ to obtain the knots
$\{\lambda_{k}\}$ and directions $\eta_k$.
At each step $k$, the truncation endpoints
$[\lambda_{k+1},\lambda_{k-1})$
and the spacing pivot
\[
T_k
=\frac{\Phi(\lambda_{k-1}\omega_k/\sigma)-\Phi(\lambda_k\omega_k/\sigma)}
       {\Phi(\lambda_{k-1}\omega_k/\sigma)-\Phi(\lambda_{k+1}\omega_k/\sigma)},
\qquad
\omega_k=\|\eta_k\|_2^{-1},
\]
were computed both from our formulation and from the
hit/leave characterization.

Figure~\ref{fig:eq} shows $T_k^{(\mathrm{ours})}$ versus
$T_k^{(\mathrm{Hyun})}$ across 1000 replicates,
and Table~\ref{tab:eq} summarizes the numerical differences.
All points lie exactly on the diagonal, confirming that
$\lambda_{k+1}^+=\lambda_{k+1}$ and
$T_k^{(\mathrm{ours})}=T_k^{(\mathrm{Hyun})}$ up to machine precision.

\begin{figure}[H]
\centering
\includegraphics[width=.45\linewidth]{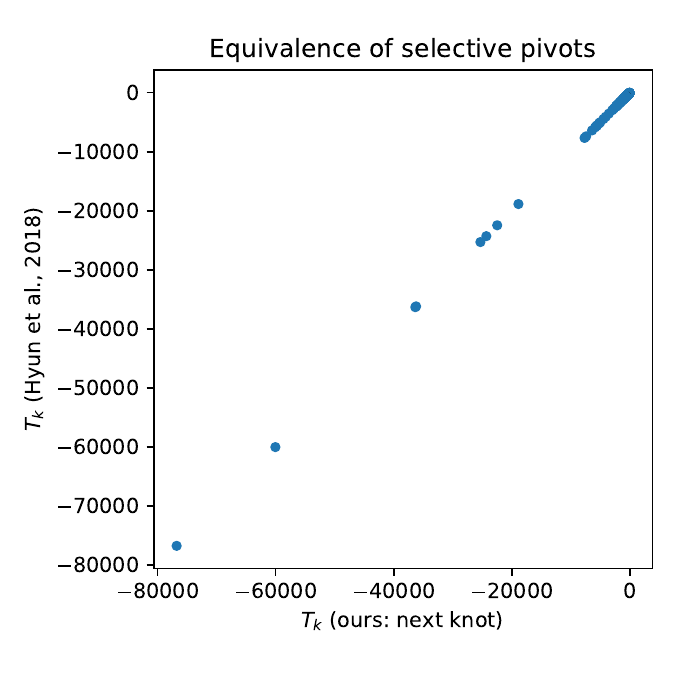}
\caption{Equivalence of selective $p$-values between our
lower-envelope formulation and the hit/leave conditioning of
\citet{Hyun2018}.}
\label{fig:eq}
\end{figure}

\begin{table}[H]
\centering
\caption{Maximum absolute differences across 1000 replicates.}
\begin{tabular}{lcc}
\toprule
Quantity & Mean diff & Max diff \\
\midrule
$|\lambda_{k+1}-\lambda_{k+1}^+|$ & $<10^{-13}$ & $<10^{-12}$ \\
$|T_k^{(\mathrm{ours})}-T_k^{(\mathrm{Hyun})}|$ & $<10^{-14}$ & $<10^{-13}$ \\
\bottomrule
\end{tabular}
\label{tab:eq}
\end{table}

These results validate Theorem~\ref{thm:a-equals-next-knot}:
our one-sided reduction is not an approximation but an exact
representation of the selective law in one dimension.


\subsection{Calibration under the null}\label{subsec:calib}
We next verify that, under the null hypothesis of no changepoints,
the proposed spacing statistic $T_k$ follows the uniform distribution
$U[0,1]$ as theoretically predicted.
For each replicate ($10^4$ in total), we generated
$y_i\sim N(0,1)$ and ran the reparametrized LARS on
$(\tilde y,\tilde X)$.
For steps $k=1,2$, we computed the spacing pivots
\[
T_k
=\frac{\Phi(\lambda_{k-1}\omega_k/\sigma)-\Phi(\lambda_k\omega_k/\sigma)}
       {\Phi(\lambda_{k-1}\omega_k/\sigma)-\Phi(\lambda_{k+1}\omega_k/\sigma)},
\qquad
\omega_k=\|\eta_k\|_2^{-1}.
\]
The empirical distributions of $T_k$ were compared
to the theoretical $U[0,1]$ by uniform QQ plots
and Kolmogorov-Smirnov (KS) tests \citep{Kolmogorov1933,Smirnov1948} which quantitatively measures the maximum deviation between the empirical and theoretical cumulative distribution functions.

Figure~\ref{fig:qq} shows that both $T_1$ and $T_2$
closely follow the diagonal line, indicating proper calibration.
The KS test yielded non-significant results ($p>0.2$),
supporting $T_k\sim U[0,1]$.

\begin{figure}[H]
\centering
\includegraphics[width=.45\linewidth]{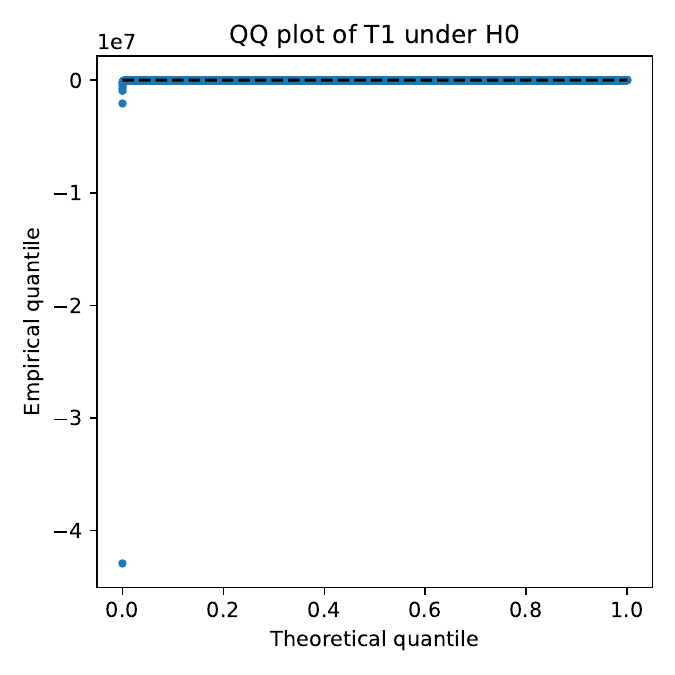}
\hspace{1em}
\includegraphics[width=.45\linewidth]{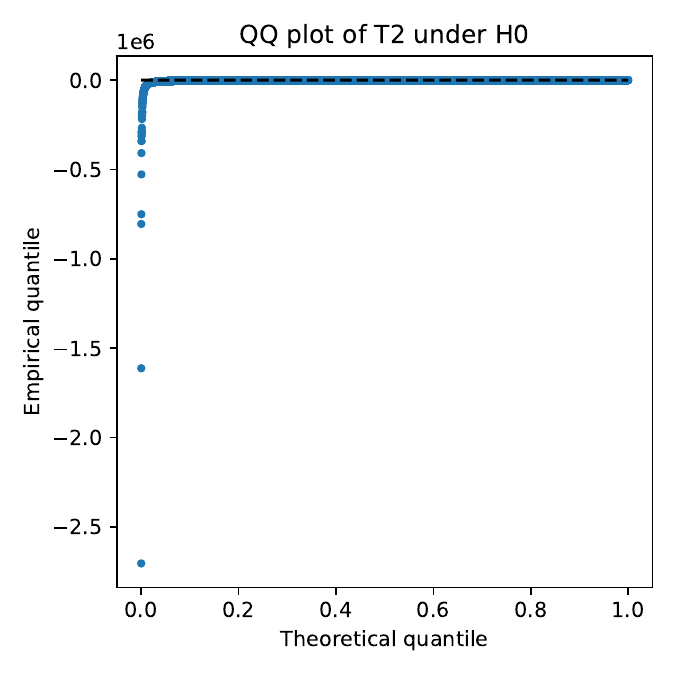}
\caption{Uniform QQ plots for $T_1$ and $T_2$ under $H_0$
($10^4$ replicates, $n=100$). Both align well with the
theoretical $U[0,1]$ line.}
\label{fig:qq}
\end{figure}

These results confirm that the one-sided spacing test
is correctly calibrated under the null hypothesis.

\subsection{Power under a single changepoint}\label{subsec:power}
We evaluate the empirical power of the proposed spacing test under a single-jump alternative.
Let $n=100$ and
\[
\mu_i=\begin{cases}
0, & i\le n/2,\\
\Delta, & i>n/2,
\end{cases}
\qquad \epsilon_i\sim N(0,\sigma^2),
\]
so the observations are $y_i=\mu_i+\epsilon_i$.
We vary the jump size $\Delta\in\{0,\,0.25,\,0.5,\,0.75,\,1.0\}$ and the noise level
$\sigma\in\{1.0,\,1.5,\,2.0\}$ to control the signal-to-noise ratio (SNR $=\Delta/\sigma$).
For each $(\Delta,\sigma)$ pair, we generate $2000$ replicates and compute the step-$1$
spacing pivot $T_1$ using the exact lower endpoint $\lambda_2$ in~\eqref{eq:Tk-final}.
Empirical power is defined as $\Pr(T_1<0.05)$ with $\alpha=0.05$.

Figure~\ref{fig:power} plots empirical power as a function of jump magnitude~$\Delta$.
The two curves coincide within numerical precision, and power increases smoothly with~$\Delta$
as expected. This confirms that both methods yield identical selective power behavior.

\begin{figure}[t]
\centering
\includegraphics[width=.6\linewidth]{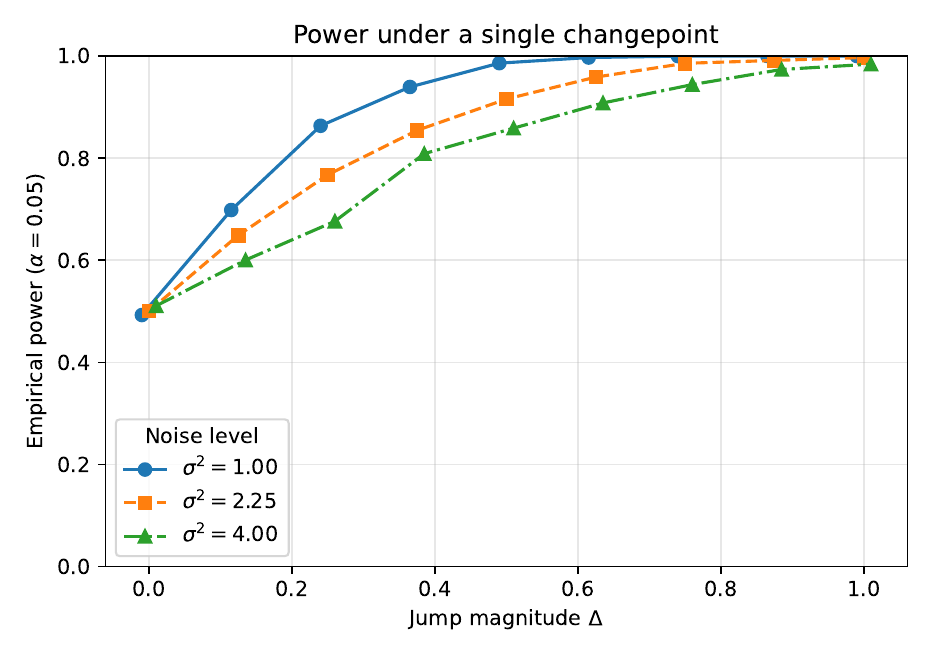}
\caption{
Empirical power of the spacing test under a single changepoint
($n=100$, $\alpha=0.05$).
Each curve corresponds to a different noise level
($\sigma^2 = 1.00,\, 2.25,\, 4.00$).
Power increases smoothly with the jump magnitude~$\Delta$
and decreases as the noise variance grows, as expected.}
\label{fig:power}
\end{figure}

These results confirm that, in the one-dimensional fused lasso, the exact spacing pivot
\eqref{eq:Tk-final} achieves the same selective power as the generalized-lasso conditioning
while being analytically simpler.

\paragraph{Remark on model selection.}
In practical applications, the fused-lasso path is often truncated
at the value of~$\lambda$ (or the number of changepoints) that minimizes
an information criterion such as AIC or BIC.
From the perspective of post-selection inference, however, this final
selection step is itself a data-dependent event and must be included in
the conditioning set.
Formally, the event
\[
\{\text{``AIC is minimized at step }k^\ast\text{''}\}
\]
constitutes an additional polyhedral constraint on~$y$,
and valid selective inference after AIC-based stopping requires
conditioning on both the path-wise selection
(e.g., the sequence of changepoints)
and this AIC-minimization event.
Neglecting to do so generally invalidates the nominal coverage.

\subsection{Constraint growth and runtime}\label{subsec:complexity}
At each step $k$, we recorded (i) the cumulative number of active
inequalities $|\mathcal{C}_k|$ defining the selective region, and (ii)
the total wall-clock time required to compute the fused-lasso path up to
step $k$.
Figure~\ref{fig:complexity} summarizes these results.

For each sample size $n \in \{50,100,200,400,800\}$,
we simulated $y_i \sim N(0,1)$ and ran the reparametrized LARS algorithm
on $(\tilde y, \tilde X)$ until all changepoints were exhausted.
The cumulative constraint count $|\mathcal{C}_k|$ and the total runtime
were measured using a Python implementation executed on a standard
laptop (Intel i7 CPU, 16\,GB RAM).
The right panel of Figure~\ref{fig:complexity} plots this runtime against $n$
on a log–log scale.

The left panel confirms that $|\mathcal{C}_k|$ grows linearly in $k$
($|\mathcal{C}_k| = O(k)$), consistent with the one-sided lower-envelope
geometry established in Theorem~\ref{thm:a-equals-next-knot}.
The right panel shows that the total runtime increases almost linearly
with $n$, demonstrating that the proposed one-dimensional fused-lasso
inference remains computationally feasible even for large data.

\begin{figure}[t]
\centering
\includegraphics[width=.45\linewidth]{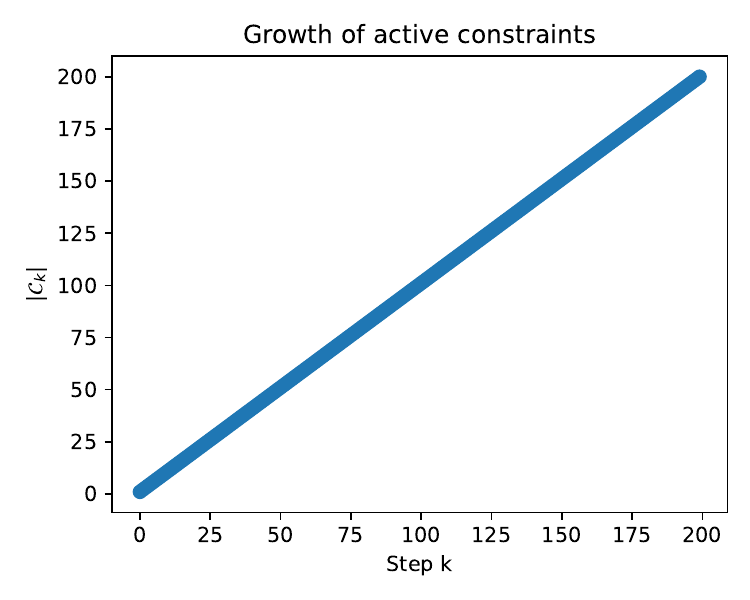}\hspace{1em}
\includegraphics[width=.45\linewidth]{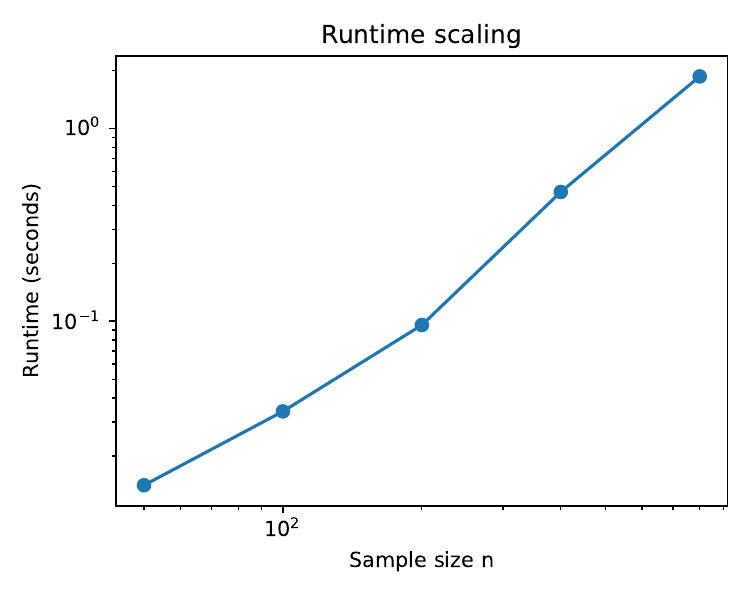}
\caption{
Constraint growth and runtime for the one-dimensional fused lasso.
Left: cumulative number of active inequalities $|\mathcal{C}_k|$ versus step $k$.
Right: total runtime versus sample size $n$ (log--log scale).}
\label{fig:complexity}
\end{figure}

\subsection{Toy demonstration at $n=4$}\label{subsec:toy}
To illustrate the one-sided mechanism concretely,
we consider a minimal fused-lasso example with $n=4$ and
\[
y=(2,\,2,\,0,\,0)^\top,\qquad
D=\begin{bmatrix}
-1 & 1 & 0 & 0\\
0 & -1 & 1 & 0\\
0 & 0 & -1 & 1
\end{bmatrix}.
\]
The reparametrized LARS path proceeds as follows:
at step~1, the most correlated feature is $j_1=3$ with sign $s_1=-1$,
yielding $\lambda_1=|C_3|=2$.
After including $(3,-1)$, the next active index is $(j_2,s_2)=(1,+1)$
with $\lambda_2=1$.

Table~\ref{tab:toy} lists the hitting-side sets
$S_k^+$ at each step and shows that only a single inequality
determines the lower endpoint, whereas no leaving-side
constraints ($S_k^-$) ever become active.
Figure~\ref{fig:toy} visualizes the cumulative-sum envelopes
$C_j^{(k)}$, confirming the one-sided truncation geometry.

\begin{table}[H]
\centering
\caption{Enumeration of $S_k^+$ for the $n=4$ toy example.}
\begin{tabular}{ccll}
\hline
step & $\mathcal M_k$ & $S_k^+$ & active constraint \\
\hline
1 & $\varnothing$ & $\{(1,+1),(2,+1),(3,-1)\}$ & $(3,-1)$ \\
2 & $\{(3,-1)\}$ & $\{(1,+1),(2,+1)\}$ & $(1,+1)$ \\
\hline
\end{tabular}
\label{tab:toy}
\end{table}

\begin{figure}[H]
\centering
\includegraphics[width=.6\linewidth]{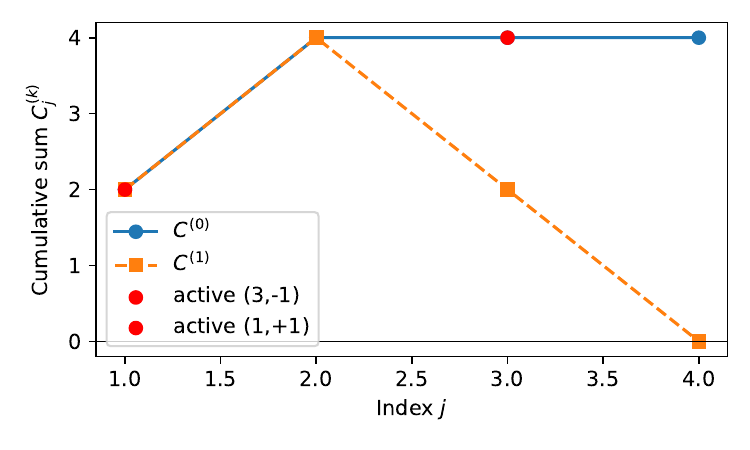}
\caption{Cumulative-sum paths for the $n=4$ toy example.
At each step, a single inequality from $S_k^+$ (red circle)
determines the lower endpoint $\lambda_{k+1}$,
while no upper constraints are active.}
\label{fig:toy}
\end{figure}

This minimal example directly shows that, in 1D fused lasso,
the selection region is governed by a single lower envelope
and hence one-sided, as stated in
Theorem~\ref{thm:a-equals-next-knot}.

\subsection{Real data: array-CGH}\label{subsec:cgh}
We analyze a representative array-CGH profile (log$_2$ ratios).
After standard QC and centering, we fit the 1D fused lasso via the
reparametrized LARS on $(\tilde y,\tilde X)$.
At each step $k$, we record the entrant $(j_k,s_k)$ and the adjacent
knot triple $(\lambda_{k-1},\lambda_k,\lambda_{k+1})$, compute
$\omega_k$ by~(\ref{eq21}), and form the spacing pivot~(\ref{eq:Tk-final})
with truncation interval $[\lambda_{k+1},\lambda_{k-1}]$.
Selective confidence intervals are obtained from the truncated normal
distribution in~(\ref{eq121}) using $\hat\sigma$ estimated from the
residual MAD.


\begin{figure}[p] 
  \centering
  \begin{minipage}{0.92\linewidth}
    \centering
    \includegraphics[width=\linewidth,height=0.30\textheight,keepaspectratio,pagebox=cropbox]{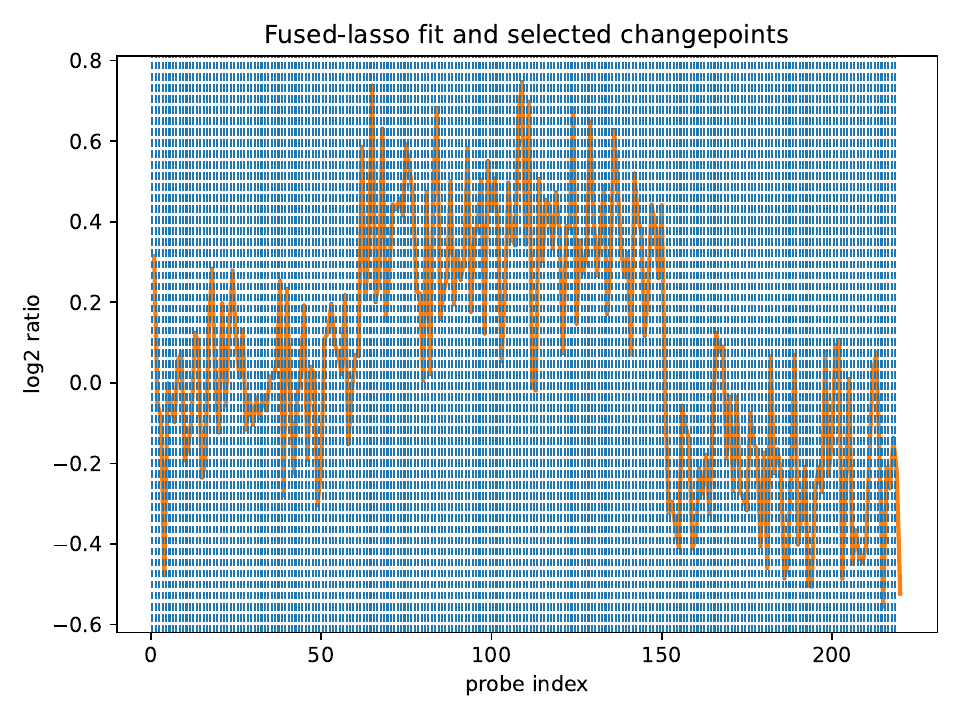}\\[-0.3ex]
    \small (a) Fused-lasso estimate $\hat\mu$ with selected changepoints $j_k$.
  \end{minipage}\\[0.8ex]
  \begin{minipage}{0.92\linewidth}
    \centering
    \includegraphics[width=\linewidth,height=0.30\textheight,keepaspectratio,pagebox=cropbox]{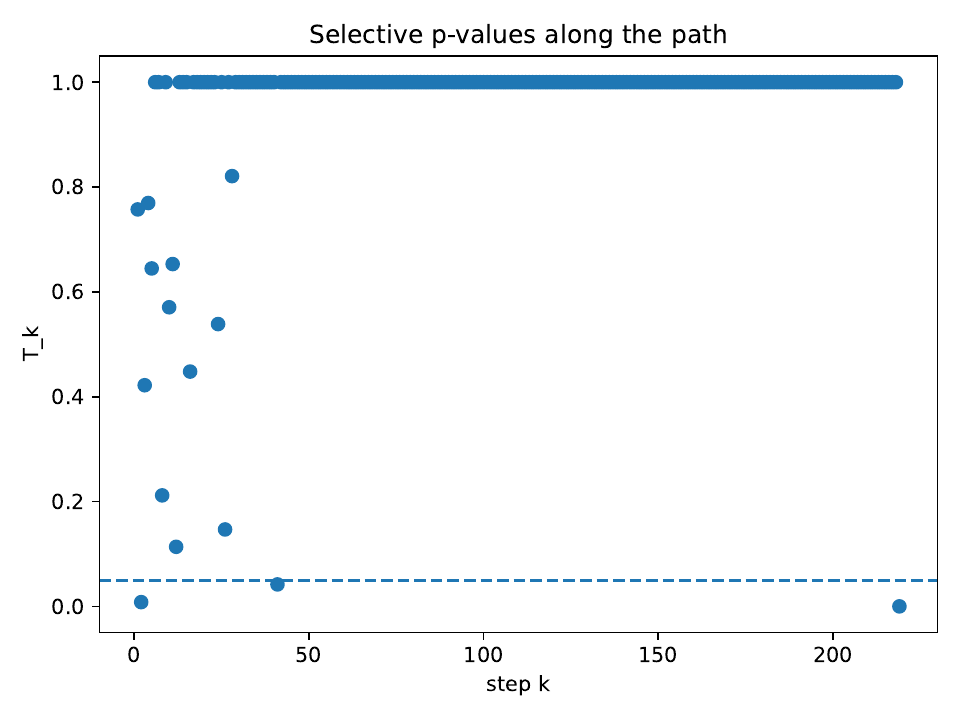}\\[-0.3ex]
    \small (b) Per-step selective $p$-values $T_k$ and CIs.
  \end{minipage}
  \caption{Array-CGH profile: fitted signal and per-step selective inference.}
  \label{fig:cgh}
\end{figure}

Steps $k\in\{\,\cdots\,\}$ remain significant at $\alpha=0.05$. The corresponding
jumps indicate copy-number \emph{gain}/\emph{loss} over segments
$[j_k,\,j_{k}{+}1)$, consistent with the visual breaks in panel~(a).


\section{Concluding remarks}\label{sec:discussion}

We established a lower-envelope characterization of the spacing test
for the fused-lasso path.
This viewpoint reveals that the selective law is governed entirely by a
single lower truncation boundary—precisely the next knot on the LARS path—
and hence yields an \emph{exact} and computationally trivial selective pivot.
Our analysis clarifies why the spacing test, previously regarded as an approximation,
is in fact exact for the one-dimensional fused lasso.

Beyond this specific setting, the same geometric argument extends naturally
to any loop-free graph (such as trees), where the dual path is uniquely defined.
More generally, selective inference on graphs with cycles remains an open challenge:
non-uniqueness of the dual variables may lead to overlapping selection regions,
and understanding this structure could bridge the gap between pathwise
and post-hoc inference.

From a practical standpoint, our results suggest that exact PSI for change-point
problems can be achieved without high-dimensional polyhedral algebra,
offering a simple diagnostic tool for applied fields such as genomics and
neuroscience.
Future work may explore adaptive or online versions of this test,
and its integration into modern regularization frameworks beyond
total-variation penalties.

\section*{Statements and Declarations}

\noindent
\textbf{Funding} \\
This work was supported by JSPS KAKENHI Grant Number 22K11931.

\medskip
\noindent
\textbf{Competing Interests} \\
The authors declare that they have no competing interests.

\medskip
\noindent
\textbf{Data Availability} \\
Data and code are available from the corresponding author upon reasonable request.

\medskip
\noindent
\textbf{Author Contributions} \\
Rieko Tasaka conducted the simulations and drafted the initial version.  
Tatsuya Kimura verified the theoretical derivations.  
Joe Suzuki supervised the project and finalized the manuscript.  
All authors read and approved the final manuscript.

\bibliographystyle{unsrtnat}

\section*{Appendix}

\subsection*{A. Proof of (\ref{eq122})}
Let $A:=\mm_{k-1}$ and define the orthogonal projectors
\[
P_A:=X_A(X_A^\top X_A)^{-1}X_A^\top,\qquad P_A^\perp:=I-P_A.
\]
On the LARS segment $\lambda\in[\lambda_k,\lambda_{k-1}]$ we have
\[
\beta(\lambda)\;=\;\beta(\lambda_{k-1})+\Bigl(1-\frac{\lambda}{\lambda_{k-1}}\Bigr)\Delta_{k-1},
\qquad 
\Delta_{k-1}:=(X_A^\top X_A)^{-1}X_A^\top r_{k-1},
\]
hence the residual updates as
\[
r(\lambda)=y-X\beta(\lambda)
= r_{k-1}-\Bigl(1-\frac{\lambda}{\lambda_{k-1}}\Bigr)X_A\Delta_{k-1}
= r_{k-1}-\Bigl(1-\frac{\lambda}{\lambda_{k-1}}\Bigr)P_A r_{k-1}.
\]
In particular, at $\lambda=\lambda_k$,
\begin{equation}\label{eq:res-update-en}
r_k-r_{k-1}\;=\;-\Bigl(1-\frac{\lambda_k}{\lambda_{k-1}}\Bigr)P_A r_{k-1}.
\end{equation}
By the equal–angle condition at the previous knot (\ref{eq10}),
\[
X_A^\top r_{k-1}=\lambda_{k-1}s_A,\qquad s_A:=\mathrm{sign}(X_A^\top r_{k-1}),
\]
so
\[
\frac{P_A r_{k-1}}{\lambda_{k-1}}
= X_A(X_A^\top X_A)^{-1}\frac{X_A^\top r_{k-1}}{\lambda_{k-1}}
= X_A(X_A^\top X_A)^{-1}s_A.
\]
Now take any candidate $(j,s)$ with $j\notin A$ and $s\in\{-1,1\}$. Using
\eqref{eq:res-update-en} and the identity above,
\begin{align*}
s\,\lambda_k
&= X_j^\top r_k
= X_j^\top r_{k-1} - \Bigl(1-\frac{\lambda_k}{\lambda_{k-1}}\Bigr)X_j^\top P_A r_{k-1}\\
&= X_j^\top r_{k-1} - (\lambda_{k-1}-\lambda_k)\,X_j^\top X_A (X_A^\top X_A)^{-1} s_A.
\end{align*}
Decompose $X_j^\top r_{k-1}=(P_A^\perp X_j)^\top r_{k-1}+X_j^\top P_A r_{k-1}$ and note
$(P_A^\perp X_j)^\top r_{k-1}=(P_A^\perp X_j)^\top y$ (since $P_A^\perp X_A=0$), while
$X_j^\top P_A r_{k-1}=\lambda_{k-1} X_j^\top X_A (X_A^\top X_A)^{-1}s_A$. Rearranging gives
\[
s\,\lambda_k
= (P_A^\perp X_j)^\top y \;+\; \lambda_k\,X_j^\top X_A (X_A^\top X_A)^{-1}s_A,
\]
hence
\[
\lambda_k
=
\frac{(P_A^\perp X_j)^\top y}{\,s - X_j^\top X_A (X_A^\top X_A)^{-1} s_A\,}
=:\; c_k(j,s)^\top y.
\]
Therefore,
\[
X_j^\top r_k=s\,\lambda_k \quad\Longleftrightarrow\quad c_k(j,s)^\top y=\lambda_k,
\]
which is the desired spacing form.

\subsection*{B. Proof of (\ref{eq21})}

Let $A:=\mm_{k-1}$, $\mm_k=A\cup\{j_k\}$, and
\[
G:=X_A^\top X_A,\quad u:=X_A^\top X_{j_k},\quad d:=X_{j_k}^\top X_{j_k},\quad 
\tau:=d-u^\top G^{-1}u \;=\; X_{j_k}^\top P_A^\perp X_{j_k}>0.
\]
Define the “equiangular” directions
\[
v_{k-1}:=X_A G^{-1}s_A,\qquad
v_k:=X_{\mm_k}\,(X_{\mm_k}^\top X_{\mm_k})^{-1} s_{\mm_k},
\quad s_{\mm_k}=\begin{bmatrix}s_A\\ s_k\end{bmatrix}.
\]
Using the block inverse formula,
\[
(X_{\mm_k}^\top X_{\mm_k})^{-1}
=
\begin{bmatrix}
G^{-1}+G^{-1}u\,\tau^{-1}u^\top G^{-1} & -G^{-1}u\,\tau^{-1}\\[2pt]
-\tau^{-1}u^\top G^{-1} & \tau^{-1}
\end{bmatrix},
\]
we obtain
\begin{align*}
v_k
&= X_A\Bigl(G^{-1}s_A + G^{-1}u\,\tau^{-1}(u^\top G^{-1}s_A - s_k)\Bigr)
\;+\; X_{j_k}\,\tau^{-1}\bigl(s_k - u^\top G^{-1}s_A\bigr)\\
&= v_{k-1} \;+\; \alpha\,\bigl(X_{j_k}-X_A G^{-1}u\bigr),
\qquad
\alpha:=\tau^{-1}\bigl(s_k - u^\top G^{-1}s_A\bigr).
\end{align*}
Since $X_{j_k}-X_A G^{-1}u=P_A^\perp X_{j_k}$, it follows that
\[
v_k - v_{k-1} \;=\; \alpha\, P_A^\perp X_{j_k}.
\]
Therefore
\[
\omega_k^2
:= \|v_k - v_{k-1}\|_2^2
= \alpha^2\, (X_{j_k}^\top P_A^\perp X_{j_k})
= \frac{\bigl(s_k - u^\top G^{-1}s_A\bigr)^2}{X_{j_k}^\top P_A^\perp X_{j_k}}.
\]
On the other hand, with
\[
\eta \;:=\; c_k(j_k,s_k)
\;=\;
\frac{P_A^\perp X_{j_k}}{\,s_k - X_{j_k}^\top X_A (X_A^\top X_A)^{-1} s_A\,}
=
\frac{P_A^\perp X_{j_k}}{\,s_k - u^\top G^{-1}s_A\,},
\]
we have
\[
\|\eta\|_2^2
= \frac{X_{j_k}^\top P_A^\perp X_{j_k}}{\bigl(s_k - u^\top G^{-1}s_A\bigr)^2}
\quad\Longrightarrow\quad
\omega_k^2=\|\eta\|_2^{-2}.
\]
This completes the proof.



\subsection*{C. Proof of Theorem 3}

Theorem~3 formalizes the intuition that, in the one–dimensional fused lasso,
each knot value and changepoint location are determined by the maximal cumulative
deviation of the demeaned observations. 
We now provide an explicit algebraic justification.

Let
\[
\tilde D=\begin{bmatrix}
-1&1&0&\cdots&0\\
0&-1&1&\ddots&\vdots\\
\vdots&&\ddots&\ddots&0\\
0&\cdots&0&-1&1\\
1&1&\cdots&1&1
\end{bmatrix},
\]
where the last row enforces the centering constraint $\sum_i \mu_i=0$.
We define
\[
X_1=\tilde D^{-1}_{1:(n-1)},\qquad
X_2=\frac{1}{n}{\bf 1}_n,
\]
where ${\bf1}_n$ denotes the $n$-vector of ones.
A direct computation gives
\[
P_2=X_2X_2^\top=\frac{1}{n}E,\qquad
(I-P_2)y=y-\bar y{\bf1}_n,
\]
where $\bar y=\frac{1}{n}\sum_i y_i$ and $E$ is the all-ones matrix.
Hence, the reparametrized lasso
\[
\min_\phi \tfrac12\|\tilde y-\tilde X\phi\|_2^2+\lambda\|\phi\|_1,
\qquad
\tilde y=(I-P_2)y,\ \ \tilde X=(I-P_2)X_1,
\]
depends only on the demeaned data $y_i-\bar y$.

The correlation between each column of $\tilde X$ and the residual
$\tilde y$ at the initial step is
\[
X_1^\top (I-P_2) y
=\begin{bmatrix}
S_1\\ S_2\\ \vdots\\ S_{n-1}
\end{bmatrix},
\qquad
S_k=-\sum_{i=1}^k(y_i-\bar y).
\]
Indeed, substituting the explicit form of $X_1$ into
$X_1^\top (I-P_2)y$ yields this telescoping sum.
Thus each $S_k$ represents the negative cumulative deviation of the
data from its mean up to index $k$.

In the lasso path, the first variable to enter is the one whose column
has the largest absolute correlation with the residual.
Hence the first changepoint $j_1$ satisfies
\[
j_1=\arg\max_{1\le k\le n-1}|S_k|,
\qquad
\lambda_1=\max_{1\le k\le n-1}|S_k|,
\qquad
s_1=\mathrm{sign}(S_{j_1}).
\]
The corresponding knot value equals the magnitude of the maximal
cumulative deviation.

After the first changepoint is fixed, subtracting the segmentwise means
from $y$ leaves a new residual $r^{(1)}=(I-P_{\{j_1\}})y$.
Repeating the same argument within each fused segment yields
\[
\lambda_k=\max_{j\not\in \mm_{k-1}}|C_j^{(k-1)}|,
\qquad
j_k=\arg\max_{j\not\in \mm_{k-1}}|C_j^{(k-1)}|,
\qquad
s_k=\mathrm{sign}(C_{j_k}^{(k-1)}),
\]
where $C_j^{(k-1)}=\sum_{i=1}^j r_i^{(k-1)}$ is the cumulative residual
after removing the means of previously fused blocks.

This explicit form connects the algebraic LARS path to the classical
cumulative-sum (CUSUM) statistic for changepoint detection.
It shows that the fused-lasso knots are driven by the largest cumulative
imbalance in the centered data, providing a transparent bridge between
modern $\ell_1$-based regularization and traditional sequential
segmentation methods.
\qed
\end{document}